\documentclass[leqno]{scrartcl}

\scrollmode
\usepackage{amsmath, amsthm, amssymb,amsfonts,mathtools}
\usepackage{verbatim,geometry,graphicx,multirow,array}
\usepackage{algpseudocode}
\usepackage{enumerate}
\usepackage{xcolor}
\usepackage[normalem]{ulem}
\usepackage{xy,psfrag}
\usepackage{datetime}
\usepackage{eucal}
\usepackage{amsmath,amsthm,amssymb,amsfonts,xcolor}
\usepackage{graphicx, caption, subcaption, listings, float}
\usepackage{multirow, tabularx}
\usepackage{array,algorithm, algpseudocode}
\usepackage{makecell, setspace}
\usepackage{color}
\usepackage{svg}
\usepackage{dirtytalk}
\usepackage[normalem]{ulem}
\usepackage{mathrsfs}

\usepackage{todonotes}

\definecolor{PrussianBlue}{rgb}{0.0, 0.19, 0.33}
\definecolor{EmeraldGreen}{rgb}{0.180, 0.545, 0.341}
\definecolor{Rust}{rgb}{0.8, 0.33, 0.0}

\usepackage[backend=biber,style=alphabetic,url=false,giveninits=true,sorting=anyt, maxbibnames=9, minbibnames=1]{biblatex} %new syntax
\DeclarePairedDelimiter{\abs}{\lvert}{\rvert}

\newcommand{\numberset}{\mathbb}
\newcommand{\N}{\numberset{N}}
\newcommand{\R}{\numberset{R}}

\renewcommand{\div}{\textup{div}}

\newcommand{\B}{\textup{B}}
\newcommand{\supt}{\textup{supp}}
\newcommand{\de}{\mathrm{d}}

\theoremstyle{plain}
\newtheorem{theorem}{Theorem}[section]
\newtheorem{lemma}[theorem]{Lemma}
\newtheorem{corollary}[theorem]{Corollary}
\newtheorem{proposition}[theorem]{Proposition}

\theoremstyle{definition}
\newtheorem{remark}[theorem]{Remark}

\newtheorem{definition}[theorem]{Definition}
\newtheorem{example}[theorem]{Example}

\newtheorem{assumptions}[theorem]{Assumptions}

\newcommand{\fr}{\penalty-20\null\hfill$\blacklozenge$}                      %quadratino nero alla fine del remark, se non vi piace, la cosa migliore e' `svuotare' la macro, cosi' non bisogna intervenire sul testo

\usepackage[colorlinks]{hyperref}
\hypersetup{hidelinks, colorlinks=true, linkcolor=Rust, urlcolor=PrussianBlue, citecolor=EmeraldGreen, linktoc=page}

\newcommand*\samethanks[1][\value{footnote}]{\footnotemark[#1]}
\newcommand{\mail}[1]{\href{mailto:#1}{\texttt{#1}}}

\allowdisplaybreaks

\begin{document}
\title{A PDE approach to Benamou--Brenier formula for the Schrödinger problem}
\author{M. Garatti\thanks{Université Paris-Saclay, CNRS, Laboratoire de mathématiques d’Orsay, ParMA, Inria Saclay, 91405, Orsay, France. e-mail: \mail{mattia.garatti@universite-paris-saclay.fr}}, L. Nenna\thanks{Universit\'e Paris-Saclay, CNRS, Laboratoire de math\'ematiques d'Orsay, ParMA, Inria Saclay, 91405, Orsay, France. 
e-mail: \mail{luca.nenna@universite-paris-saclay.fr}} \thanks{Institut Universitaire de France (IUF)}, S. Rota Nodari\thanks{Université Côte d'Azur, CNRS, Laboratoire J.A. Dieudonné,
06108 Nice, France. e-mail: \mail{simona.rotanodari@univ-cotedazur.fr}} 
\samethanks[3],
%\thanks{Institut Universitaire de France (IUF)},
L. Tamanini\thanks{Università Cattolica del Sacro Cuore, Dipartimento di Matematica e Fisica ``Niccol\`o Tartaglia'', I-25133, Brescia, Italy. e-mail: \mail{luca.tamanini@unicatt.it}}
}
\date{\today}

\newgeometry{left=3 cm, right = 3 cm, bottom=3 cm, top=2 cm}
\maketitle

\begin{abstract}
We studied the Benamou--Brenier formulation of the Schrödinger problem, focusing on a gap between theoretical results and applications, that often involve measures with unbounded support. While the existing proof in the literature relies on the compactness of the marginals' supports to ensure the necessary regularity of the Schrödinger potentials, we extend the validity of the Benamou--Brenier formula to the larger class of sub-Gaussian probability measures. Exploiting fine estimates on the Hessian of the potentials and the entropic interpolation, we provide an almost self-contained proof that establishes the existence of a velocity field with the appropriate polynomial growth that ensures the right integrability. This result justifies the use of the dynamic formulation in more general settings, such as Gaussian and mixture-of-Gaussians models, important also for the applications.
\end{abstract}

\vskip\baselineskip\noindent

\textit{Keywords.} Benamou--Brenier formula, Schrödinger problem.\\
\textit{2020 Mathematics Subject Classification.}  Primary: 49Q22; Secondary: 49N15, 94A17, 49K40.

%\tableofcontents

%\newpage

\section{Introduction}

The \emph{Schrödinger problem} originated in a series of papers by Erwin Schrödinger in 1931--1932 (see \cite{schrodinger1931umkehrung} and \cite{schrodinger1932theorie}), motivated by the following basic question: given the empirical distribution of a Brownian particle cloud at two distinct times, what is the most likely evolution connecting these two observations?  

Following \cite{follmer1997entropy}, \cite{leonard2013survey}, this naturally leads to the following variational problem: given two probability measures $\mu_0,\mu_1\in\mathcal P(\mathbb R^n)$ and a parameter $\varepsilon>0$, one aims to solve
\[
\boxed{\inf\bigl\{H(\gamma \mid \mathsf{R}_\varepsilon):\gamma \in\Gamma(\mu_0,\mu_1)\bigr\},}
\]
\restoregeometry
\noindent where the reference measure $\mathsf{R}_\varepsilon = \mathsf{r}_\varepsilon \mathcal{L}^n \otimes \mathcal{L}^n$ (see \eqref{eq:heatkernel} for the definition of $r_\varepsilon$) is the joint law at times $t=0$ and $t=\varepsilon$ of the (reversible) Wiener measure, $H(\cdot\mid\cdot)$ denotes the relative entropy, and $\Gamma(\mu_0,\mu_1)$ is the set of transport plans with marginals $\mu_0$ and $\mu_1$.

Although originally introduced in statistical mechanics, the Schr\"odinger problem has acquired renewed importance over the last two decades because of its deep connection with optimal transport. Indeed, as $\varepsilon\to0$, the Schr\"odinger problem converges to the quadratic optimal transport problem, both at the level of costs and minimizers; for this reason, it can be interpreted as an entropic regularization of the quadratic Monge--Kantorovich problem. 

This qualitative result was initially established by Mikami \cite{mikami2004monge} for the quadratic cost on $\mathbb{R}^n$, subsequently extended by Mikami–Thieullen \cite{mikami2008optimal} to more general cost functions and ultimately generalized by Léonard \cite{leonard2012schrodinger} to Polish spaces and general diffusion processes. From a more quantitative point of view, a huge literature on convergence rates is available (see \cite{carlier2023convergence, erbar2015large, adams2013large, conforti2021formula, chizat2020faster, pal2024difference, nenna2025convergence, nenna2025convergence_unb, eckstein2024convergence}). Understood the importance of the temperature parameter $\varepsilon$, it will be clear the decision of making it explicit throughout the paper.
A key feature of the Schr\"odinger problem is that, under mild assumptions, its unique minimizer $\gamma^\varepsilon$ admits the factorization
\[
\gamma^\varepsilon=(f^\varepsilon\otimes g^\varepsilon)\mathsf{R}_\varepsilon ,
\]
where the pair $(f^\varepsilon,g^\varepsilon)$ solves the so-called Schr\"odinger system (see, for details, \cite{ruschendorf1995convergence} and \eqref{SchoSystem} below). 
This decomposition plays a major role both in theory and in computation. On the one hand, it provides the natural analogue of Kantorovich potentials in the entropic framework; on the other hand, it allows to apply powerful iterative scaling procedures such as the Sinkhorn algorithm: see, for details, from the very first contributions \cite{sinkhorn1964relationship}, \cite{sinkhorn1967diagonal} and \cite{sinkhorn1967concerning} up to the more recents \cite{cuturi2013sinkhorn}, in which the entropic regularization and the Sinkhorn algorithm have really been introduced in the optimal transport theory as a computational tool, \cite{benamou2015iterative}, in which they show the applicability of the algorithm also to fluid-dinamics problems and \cite{greco2025hessian}, in which they establish new Hessian stability results and convergence rates for Sinkhorn potentials, using the semiconcavity of the potentials to refine the analysis of the algorithm's performance. In recent years, significant progress has been made in understanding the regularity, stability, and convexity properties of the corresponding Schr\"odinger potentials
\[
\varphi_t^\varepsilon=\varepsilon\log f_t^\varepsilon,
\qquad
\psi_t^\varepsilon=\varepsilon\log g_t^\varepsilon,
\]
as well as their relation to Hamilton--Jacobi--Bellman and Fokker--Planck equations; see for instance \cite{chiarini2023gradient}, in which they provide fundamental gradient estimates for Schrödinger potentials, which rigorously hold up the stability and convergence behavior of the Sinkhorn approximations toward Monge's problem; \cite{conforti2024weak}, in which he derives weak semiconvexity estimates for Schrödinger potentials, finding a direct link between these ones and the log-Sobolev inequality for Schrödinger bridges; and, again, \cite{greco2025hessian}.

The analogy with optimal transport becomes particularly striking at the dynamical level. In the quadratic transport setting, the celebrated Benamou--Brenier formula provides a dynamic reformulation of the Monge--Kantorovich problem:
\[
\boxed{
\begin{aligned}
\inf_{\gamma \in\Gamma(\mu_0,\mu_1)} & \int |x-y|^2\,\de\gamma(x,y)  \\
&= \inf\left\{ \int_0^1\int_{\mathbb R^n}|v_t|^2\,\eta_t\,\de\mathcal L^n\,\de t : \frac{\partial}{\partial t}\eta_t+\operatorname{div}(v_t\eta_t)=0, \eta_0=\mu_0, \eta_1=\mu_1 \right\}.
\end{aligned}
}
\]
First proved in \cite{benamou2000computational}, this formula is now one of the cornerstones of the theory of optimal transport; see, for example, \cite{villani2008optimal,santambrogio2015optimal,ambrosio2005gradient}. Its importance lies in the fact that it transforms a purely static minimization problem into a fluid-dynamical one, revealing the geometric structure of Wasserstein space and making possible a PDE-based approach to transport problems.
Because the Schr\"odinger problem is an entropic counterpart of optimal transport, it is natural to ask whether a similar dynamical reformulation holds in this setting. In the presence of diffusion encoded by the Wiener measure, the continuity equation should be replaced by the Fokker--Planck equation
\[
\frac{\partial}{\partial t}\eta_t-\frac{\varepsilon}{2}\Delta\eta_t+\operatorname{div}(v_t\eta_t)=0.
\]
One is then led to expect a Benamou--Brenier-type formula for the entropic cost, in which the relative entropy with respect to $\mathsf{R}_\varepsilon$ is represented as the minimum of a kinetic action under the Fokker--Planck constraint. Such a result was established by Gigli and Tamanini in \cite{gigli2020benamou} in the setting of $\mathrm{RCD}^*(K,N)$ spaces and under compactness assumptions on the marginals, namely bounded densities with bounded support. 
 We also refer to \cite{leonard2013survey,gentil2017analogy}, where the equivalence between the two formulations  with an only finite entropy assumption on the marginals is suggested via probabilistic arguments. However, the proof contains some flaws which can be fixed by using the same arguments as in \cite{cattiaux1995large}. 
Here we want to give an alternative, more explicit, and self-contained proof by only means of PDE tools.

% {\color{red}Notice that   in \cite{leonard2013survey} it is reported that the equivalence between the two formulations  with an only finite entropy assumption on the marginals by means of Girsanov theory. LN: reformulate this sentence}

% \todo[inline]{LT: allora, la situazione e' questa: io e Nicola dimostriamo BB per SP sotto ipotesi di compattezza. In $\R^n$ Gentil-L\'eonard-Ripani enunciano una formula di BB per SP, qui \cite[Theorem 5.1]{gentil2017analogy}, ma a me la loro dimostrazione sembra sbagliata. Piu' precisamente, sotto (5.7) quando prendono l'inf sulle misure di probabilita' P dovrebbero ottenere la disuguaglianza opposta. Confermate?

% Perche' faccio questa premessa? Perche' da un lato Gentil-L\'eonard-Ripani va citato, ma come citarlo? non so se dire che ``their proof contains a flaw'' o qualcosa del genere... perche' senza questa precisazione, sembra che loro facciano gia' BB per SP senza ipotesi di compattezza, quindi perche' farlo noi?}

However, many natural examples fall outside this compact framework. Gaussian measures, strongly log-concave measures, and various models used in applications have unbounded support. This is not merely a technical issue. In the Gaussian case, explicit computations are available and strongly suggest that the dynamic representation should remain valid beyond compactly supported marginals: see \cite{mallasto2022entropy}. More generally, probability distributions with sub-Gaussian tails arise naturally in stochastic analysis, Bayesian inference, diffusion models, and entropy-regularized transport between statistical distributions. Therefore, extending the Benamou--Brenier formulation to non-compact settings is both mathematically natural and relevant for applications.

The main difficulty is that the proof of \cite{gigli2020benamou} relies on regularity and integrability properties that are automatic under compact support but become delicate when the marginals are supported on the whole space. In particular, one needs to justify differentiation under the integral sign and repeated applications of Gauss--Green formulas for quantities involving the Schr\"odinger potentials and the entropic interpolation $\varrho^\varepsilon_t$. For this, it is essential to control expressions such as
\[
\psi_t^\varepsilon \rho_t^\varepsilon,\qquad
|\nabla\psi_t^\varepsilon|^2\rho_t^\varepsilon,\qquad
\psi_t^\varepsilon\Delta\rho_t^\varepsilon,
\]
and these controls are no longer immediate without compactness. Moreover, boundedness of the marginals is not inherited by the Schr\"odinger factors $f^\varepsilon$ and $g^\varepsilon$: as already visible in explicit Gaussian examples, one may have exponential growth of $g^\varepsilon$ at infinity even when the marginals themselves are smooth and rapidly decaying. This creates a genuine gap between the available abstract theory and the situations most relevant in practice.

The purpose of the present paper is to bridge this gap by identifying a sufficiently general class of marginals for which the Benamou--Brenier formula for the Schr\"odinger problem can still be proved rigorously. More precisely, we consider probability measures of the form
\[
\mu_0=e^{-V_0}\mathcal L^n,
\qquad
\mu_1=e^{-V_1}\mathcal L^n,
\]
where $V_0,V_1 \in C^2(\R^n)$ with uniform two-sided Hessian bounds. In particular, this framework includes Gaussian measures and, more generally, a broad class of strongly log-concave, sub-Gaussian distributions. The key feature of these assumptions is that they guarantee Gaussian-type decay at infinity while still allowing for unbounded support.
Our strategy is to combine recent regularity estimates for Schr\"odinger potentials with careful moment and integrability estimates along the entropic interpolation. On the regularity side, we exploit the semiconvexity and semiconcavity bounds, obtained in \cite{conforti2024weak} and \cite{chiarini2024semiconcavity}, which imply at most quadratic growth for the potentials and linear growth for their gradients. On the analytic side, we prove that the entropic interpolation has sufficiently strong Gaussian moments, together with suitable bounds on its derivatives, to justify all the integrations by parts required in the proof. In this way, we obtain an almost self-contained derivation of the Benamou--Brenier formula in a non-compact setting that still covers the principal examples of interest.
Practically speaking, the main contribution of this work can be summarized in finding sufficiently general assumptions on the marginals (the ones above) such that the following Theorem holds. Moreover, we also stress that we get a uniqueness result for the Benamou--Brenier formula under the Fokker--Planck constraint.

\begin{theorem}[Simplified version of Theorem \ref{main_theorem}]
Under suitable assumptions on $\mu_0,\mu_1 \in \mathcal{P}(\R^n)$, for every $\varepsilon >0$, it holds
\begin{align*}
\varepsilon &\underset{\gamma \in \Gamma(\mu_0,\mu_1)}{\min}   H(\gamma \mid \mathsf{R}_\varepsilon ) = \varepsilon H(\mu_0 \mid \mathcal{L}^n) \\
&+\min \left\lbrace \int_0^1 \int \frac{\abs{v_t}^2}{2}\eta_t \de\mathcal{L}^n \de t: \frac{\partial}{\partial t} \eta_t -\frac{\varepsilon}{2}\Delta \eta_t +\div(v_t \eta_t) =  0, \eta_0\mathcal{L}^n = \mu_0, \eta_1\mathcal{L}^n = \mu_1\right\rbrace,
\end{align*}
where the Fokker--Planck equation has to be understood in distributional sense.
\end{theorem}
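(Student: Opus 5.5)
The proof splits into the two inequalities. For each, the plan is to work with the Schrödinger system $\gamma^\varepsilon=(f^\varepsilon\otimes g^\varepsilon)\mathsf{R}_\varepsilon$ and its forward/backward evolutions $f^\varepsilon_t=\mathsf{h}_{\varepsilon t/2}f^\varepsilon$ and $g^\varepsilon_t=\mathsf{h}_{\varepsilon(1-t)/2}g^\varepsilon$, where $\mathsf h$ is the heat semigroup, normalised so that $\mathsf{r}_\varepsilon$ is the heat kernel at time $\varepsilon$. From these we form:
\begin{itemize}
\item the entropic interpolation $\varrho^\varepsilon_t=f^\varepsilon_t g^\varepsilon_t$;
\item the potentials $\varphi^\varepsilon_t=\varepsilon\log f^\varepsilon_t$ and $\psi^\varepsilon_t=\varepsilon\log g^\varepsilon_t$, which solve the Hamilton--Jacobi--Bellman equations $\partial_t\psi_t+\tfrac{\varepsilon}{2}\Delta\psi_t+\tfrac12|\nabla\psi_t|^2=0$ and $\partial_t\varphi_t-\tfrac{\varepsilon}{2}\Delta\varphi_t-\tfrac12|\nabla\varphi_t|^2=0$.
\end{itemize}
One checks that $\varrho^\varepsilon$ solves the Fokker--Planck equation with $v_t=\nabla\psi^\varepsilon_t$. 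Equivalently, $\partial_t\varrho_t+\div(\tfrac12(\nabla\psi_t-\nabla\varphi_t)\varrho_t)=0$. Expanding $H(\gamma^\varepsilon\mid\mathsf R_\varepsilon)=\frac1\varepsilon\big(\int\varphi^\varepsilon_0\de\mu_0+\int\psi^\varepsilon_1\de\mu_1\big)$ (up to the $\log$ of the normalisation) gives the left-hand side explicitly.

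\textbf{Inequality ``$\geq$''.} The plan is to show that $(\varrho^\varepsilon,\nabla\psi^\varepsilon)$ is admissible with action exactly $\varepsilon\min H-\varepsilon H(\mu_0\mid\mathcal L^n)$. To do so, compute
\[
\frac{\de}{\de t}\int\psi^\varepsilon_t\varrho^\varepsilon_t\,\de\mathcal L^n
\]
using the HJB equation and the Fokker--Planck equation. Integrating by parts yields $\int\tfrac12|\nabla\psi_t|^2\varrho_t$. Integrating in $t$ and writing $\psi_0=\varepsilon\log\varrho_0-\varphi_0$ then produces $\varepsilon H(\mu_0\mid\mathcal L^n)$ and the boundary terms of the static cost.

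\textbf{Inequality ``$\leq$''.} Take any admissible $(\eta,v)$ with finite action. Test the Fokker--Planck equation against $\psi^\varepsilon_t$, which requires a cut-off/mollification argument. Then use Young's inequality $v\cdot\nabla\psi\le\tfrac12|v|^2+\tfrac12|\nabla\psi|^2$ together with the HJB equation. This gives
\[
\int\psi_1\de\mu_1-\int\psi_0\de\mu_0\le\int_0^1\!\!\int\tfrac12|v_t|^2\eta_t .
\]
Combining this with $\int\psi_0\,\de\mu_0=\varepsilon H(\mu_0\mid\mathcal L^n)-\int\varphi_0\,\de\mu_0$ gives the claim. Equality forces $v=\nabla\psi$ a.e., which yields the uniqueness statement via uniqueness of Fokker--Planck solutions with Lipschitz drift.

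\textbf{Main obstacle.} The hard part is justifying these integrations by parts and differentiations on $\R^n$. Under two-sided Hessian bounds on $V_0,V_1$, the semiconvexity/semiconcavity estimates of \cite{conforti2024weak,chiarini2024semiconcavity} give $|\nabla^2\psi_t|,|\nabla^2\varphi_t|\le C$ uniformly for $t$ in compact subintervals, and hence quadratic growth of the potentials and linear growth of their gradients. I would then prove Gaussian moment bounds $\int e^{c|x|^2}\varrho_t<\infty$ uniformly in $t$. For this I would write $\varrho_t$ as the law of the Schrödinger bridge, i.e. an SDE with Lipschitz drift started from the sub-Gaussian $\mu_0$. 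These bounds make $\psi_t\varrho_t$, $|\nabla\psi_t|^2\varrho_t$ and $\psi_t\Delta\varrho_t$ integrable and their boundary fluxes vanish.

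For a general competitor $\eta$, finite action only gives $\int|v|^2\eta<\infty$. I expect the quadratic growth of $\psi$ to be the delicate point. I would first derive a second-moment (or Gaussian-moment) bound for $\eta_t$ via Grönwall applied to $\int|x|^2\eta_t$, and then run the test-function argument with $\psi_t\chi_R$ for a cut-off $\chi_R$, letting $R\to\infty$ by dominated convergence. Behaviour near $t=0,1$, where the Hessian bounds degenerate, is handled by working on $[\delta,1-\delta]$ and passing to the limit using continuity of $t\mapsto\int\psi_t\eta_t$.
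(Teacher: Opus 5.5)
Your proposal is correct and has the same skeleton as the paper. The inequality ``$\ge$'' is exactly Proposition \ref{fluidnotvar}. The inequality ``$\le$'' is the duality argument: test the Fokker--Planck equation against an HJB solution and apply Young's inequality. The real difference is how the duality step reaches the unbounded potential $\psi^\varepsilon$.

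The paper never tests a competitor against $\psi^\varepsilon$ directly. It uses $f\in C^1([0,1])\cap C^2_b(\R^n)$ with $\partial_t f+\frac{\varepsilon}{2}\Delta f+\frac12\abs{\nabla f}^2\le 0$. Remark \ref{weakeningtestfunctions} allows this under the sole condition $\|v_t\|_{L^1(\eta_t)}\in L^1(0,1)$, with no moment information on $\eta$. Lemma \ref{density} then uses bounded exact HJB solutions $f_R=\varepsilon\log\int(e^{\psi^\varepsilon_1/\varepsilon}\zeta_R+\frac1R)\,\mathsf{r}_{\varepsilon(1-t)}(\cdot,y)\,\de y$ to raise the supremum to $\int\psi^\varepsilon_1\de\mu_1-\int\psi^\varepsilon_0\de\mu_0$.

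You instead pay for a Gr\"onwall second-moment bound on the competitor. This does work, e.g.\ with the $C^2_b$ functions $\phi_N=\abs{x}^2/(1+\abs{x}^2/N)$, for which $\abs{\nabla\phi_N}^2\le 4\phi_N$ and $\Delta\phi_N\le 2n$. After that, the cut-off error $\psi^\varepsilon_t\,v_t\cdot\nabla\chi_R\,\eta_t$ is dominated by $C(1+\abs{x})\abs{v_t}\eta_t$. In return you get the exact identity
\[
\int_0^1\!\!\int\frac{\abs{v_t}^2}{2}\eta_t\,\de\mathcal{L}^n\de t-\Bigl(\int\psi^\varepsilon_1\de\mu_1-\int\psi^\varepsilon_0\de\mu_0\Bigr)=\frac12\int_0^1\!\!\int\abs{v_t-\nabla\psi^\varepsilon_t}^2\eta_t\,\de\mathcal{L}^n\de t .
\]
Uniqueness of the minimizer then follows at once from uniqueness for Fokker--Planck with Lipschitz drift. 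This is more direct than the paper's convexity/characteristics argument. The paper's route, in exchange, yields the dual formulation (the Corollary) along the way.

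On the regularity side, you get Gaussian moments of $\varrho^\varepsilon_t$ from the SDE representation. The paper instead runs a weighted Gr\"onwall argument with $e^{W(t)\abs{x}^2}$ and a Riccati equation for $W$.

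Note, though, that moments alone do not make $\psi^\varepsilon_t\Delta\varrho^\varepsilon_t$ integrable. The paper proves weighted $L^1$ bounds on $\nabla\varrho^\varepsilon_t$ and $\Delta\varrho^\varepsilon_t$ via the parabolic Kato inequality. You should state explicitly that $\varrho^\varepsilon_t=e^{(\varphi^\varepsilon_t+\psi^\varepsilon_t)/\varepsilon}$, together with the Hessian bounds on both potentials, gives $\abs{\nabla\varrho^\varepsilon_t}\lesssim(1+\abs{x})\varrho^\varepsilon_t$ and $\abs{\Delta\varrho^\varepsilon_t}\lesssim(1+\abs{x}^2)\varrho^\varepsilon_t$.

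Finally, the paper takes the Hessian bounds of Lemma \ref{psi_t} on all of $[0,1]$, so your $[\delta,1-\delta]$ device is only a precaution.
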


\paragraph*{Outline of the contents} The papers is organized as follows: in Section \ref{Framework}, we collect a quick overview of the Schrödinger problem with the related theory and we establish the rigorous mathematical framework. In Section \ref{choice}, we describe how we arrived at the choice of sub-Gaussian assumptions for the marginals and we state our main result with a sketch of the proof and a direct Corollary. Then, in Section \ref{Main}, the first part is dedicated to prove all the technicalities and the second part to the rigorous proof of our main result, namely Theorem \ref{main_theorem}.

\section{Preliminaries and setting}\label{Framework}
In this Section we aim at establishing the mathematical framework as well as introducing the Schrödinger problem and some of its properties which will be helpful throughout the paper.

\paragraph{The Schrödinger problem} Consider, for a fixed $\varepsilon>0$, $\mathsf{R}_{\varepsilon} = \mathsf{r}_{\varepsilon} \mathcal{L}^n\otimes\mathcal{L}^n$, where
\begin{equation}\label{eq:heatkernel}
    \mathsf{r}_s(x,y)= \frac{1}{\sqrt{(2\pi s )^n}}e^{-\frac{\abs{x-y}^2}{2s}}
\end{equation}
is the heat kernel in $(x,y)$ at time $s$. Given $\mu_0= \varrho_0 \mathcal{L}^n, \mu_1=\varrho_1 \mathcal{L}^n \in \mathcal{P}_2(\R^n)$, namely the set of probability measures over $\R^n$ with finite second moment, the \emph{Schrödinger problem} reads as the minimization problem defined as
\begin{equation}\label{eq:SPstatic}\tag{$\mathrm{SP}_\varepsilon$}
\inf \left\lbrace H(\gamma \mid \mathsf{R}_{\varepsilon}): \gamma \in \Gamma(\mu_0,\mu_1)\right\rbrace,
\end{equation}
where 
\[
H(Q \mid P) = 
\begin{cases}
	\displaystyle \int \log\left( \frac{\de Q}{\de P}\right) \de Q & \textup{if } Q \ll P, \\[5pt]
	+\infty & \textup{otherwise}
\end{cases} 
\]
is the entropy functional and 
\[
\Gamma(\mu_0,\mu_1) = \left\lbrace \gamma \in \mathcal{P}(\R^n\times \R^n) : (p^1)_\# \gamma = \mu_0,  (p^2)_\# \gamma = \mu_1\right\rbrace, 
\]
where $p^1 : \R^n \times \R^n \to \R^n$, $p^1(x,y) := x$, and $p^2 :\R^n \times \R^n \to \R^n$, $p^2(x,y) := y$, are the canonical projections and where we use the standard notation $f_\# \gamma$ for any measurable map $f : \R^n \times \R^n \to \R^n$ to mean the push-forward measure defined as $f_\# \gamma(B) := \gamma(f^{-1}(B))$ for all Borel sets $B \subseteq \R^n$.

It is known (see \cite[Proposition $2.1$]{gigli2021second} for details, who build on the previous contributions \cite{leonard2001minimizers,borwein1994entropy,ruschendorf1993note}) that if $H(\mu_0 \otimes \mu_1 \mid \mathsf{R}_\varepsilon) < +\infty$, then there exists a unique solution $\gamma^\varepsilon$ to the above problem, also called \emph{Schr\"odinger plan}. Moreover, there exist two Borel functions $f^\varepsilon,g^\varepsilon: \R^n \to \left[ 0,+\infty\right[ $, a.e.\ uniquely determined up to the trivial transformation $(f,g) \mapsto (cf,\frac{g}{c})$, $c>0$, such that 
	\[ \gamma^\varepsilon = (f^\varepsilon \otimes g^\varepsilon)\mathsf{R}_{\varepsilon}, \]
%We will refer to the pair $(f^\varepsilon,g^\varepsilon)$ as $(f,g)$-decomposition.
where the couple $(f^\varepsilon,g^\varepsilon)$ is the unique solution (see  \cite[Section $2$]{ruschendorf1995convergence} or \cite[Theorem 2.1]{nutz2021introduction} ) to the so-called \emph{Schrödinger system}
\begin{equation}
\label{SchoSystem}
\begin{cases}
   \displaystyle f^\varepsilon(x) \int g^\varepsilon(y) \mathsf{r}_\varepsilon(x,y) \de y = \varrho_0(x) & \textup{ a.e. in } \R^n, \\[5pt]
   \displaystyle g^\varepsilon(y) \int f^\varepsilon(x) \mathsf{r}_\varepsilon(x,y) \de x = \varrho_1(y) & \textup{ a.e. in } \R^n. \\
\end{cases}\end{equation}
Notice that the above system simply force the $\gamma^\varepsilon$ to satisfy the marginal constraints, that is  $(f^\varepsilon \otimes g^\varepsilon)\mathsf{R}_\varepsilon \in \Gamma(\mu_0,\mu_1)$.

\paragraph{The Benamou--Brenier formula} In Optimal Transport, the Benamou--Brenier formula is a classical result that translates the static variational definition of Kantorovich into a fluid dynamical version that permits to recover the true meaning of this interpolation problem.

\begin{theorem}
For every $\mu_0,\mu_1 \in \mathcal{P}_2(\R^n)$, it holds
\begin{align*}\underset{\gamma \in \Gamma(\mu_0,\mu_1)}{\min}\; &\int \abs{x-y}^2 \de\gamma(x,y)\\ 
&=\min \left\lbrace \int_{0}^{1}\int \abs{v_t}^2 \eta_t \de\mathcal{L}^n \de t: \frac{\partial}{\partial t}\eta_t + \div(v_t \eta_t) = 0, \eta_0=\rho_0,\;\eta_1=\rho_1\right\rbrace,\end{align*}
where the continuity equation has to be understood in distributional sense.
\end{theorem}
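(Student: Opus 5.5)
We prove the classical Benamou--Brenier formula by establishing the two inequalities separately. Write $W_2^2(\mu_0,\mu_1)$ for the left-hand side. A minimizer $\gamma$ exists because the cost is continuous and bounded below, and $\Gamma(\mu_0,\mu_1)$ is tight, hence weakly compact. Since $\mu_0 \ll \mathcal{L}^n$, Brenier's theorem gives that the optimal plan is induced by a map $T=\nabla\phi$, with $\phi$ convex.

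\emph{Inequality ``$\geq$'' (construction).} Consider the displacement interpolation $\eta_t := ((1-t)\mathrm{Id}+tT)_\#\mu_0$. Three facts are needed.
\begin{itemize}
\item By McCann's argument, $X_t:=(1-t)\mathrm{Id}+t\nabla\phi$ is injective $\mu_0$-a.e. for $t<1$. Hence $\eta_t\ll\mathcal{L}^n$, and the velocity $v_t := (T-\mathrm{Id})\circ X_t^{-1}$ is well defined $\eta_t$-a.e.
\item For every $\zeta\in C^\infty_c(]0,1[\times\R^n)$,
\[
\int_0^1\int \Bigl(\partial_t\zeta+\nabla\zeta\cdot v_t\Bigr)\,\de\eta_t\,\de t=\int\int_0^1\frac{\de}{\de t}\zeta(t,X_t(x))\,\de t\,\de\mu_0(x)=0 .
\]
So $(\eta,v)$ solves the continuity equation in the distributional sense, with the prescribed endpoints.
\item The action is
\[
\int_0^1\int\abs{v_t}^2\,\de\eta_t\,\de t=\int\abs{T(x)-x}^2\,\de\mu_0=W_2^2(\mu_0,\mu_1).
\]
\end{itemize}
This shows the infimum is at most $W_2^2$ and is attained by $(\eta,v)$.

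\emph{Inequality ``$\leq$'' (lower bound).} Let $(\eta_t,v_t)$ be any admissible pair with finite action. The standard route is to regularize. Convolve $\eta_t$ and $v_t\eta_t$ with a mollifier $\rho_\delta$, and set
\[
\eta^\delta_t:=\eta_t*\rho_\delta,\qquad v^\delta_t:=\frac{(v_t\eta_t)*\rho_\delta}{\eta^\delta_t}.
\]
The pair $(\eta^\delta,v^\delta)$ still solves the continuity equation. By joint convexity of $(a,b)\mapsto\abs{b}^2/a$ and Jensen's inequality, its action does not increase. Moreover $v^\delta_t$ is smooth and locally Lipschitz, with at most linear growth in $x$. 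For the growth bound, one can first pass to a Gaussian-tailed mollification, or alternatively invoke the superposition principle of Ambrosio directly.

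The Cauchy--Lipschitz flow $X^\delta_t$ then exists and represents the solution: $\eta^\delta_t=(X^\delta_t)_\#\eta^\delta_0$, by uniqueness for the continuity equation with Lipschitz fields. The coupling $(X^\delta_0,X^\delta_1)_\#\eta^\delta_0$ lies in $\Gamma(\eta^\delta_0,\eta^\delta_1)$. By Cauchy--Schwarz in time,
\[
W_2^2(\eta^\delta_0,\eta^\delta_1)\le\int\abs{X^\delta_1-x}^2\,\de\eta^\delta_0\le\int\int_0^1\abs{v^\delta_t(X^\delta_t)}^2\,\de t\,\de\eta^\delta_0=\int_0^1\int\abs{v^\delta_t}^2\,\de\eta^\delta_t\,\de t .
\]
The right-hand side is bounded by the original action. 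Letting $\delta\to0$, we have $\eta^\delta_i\to\mu_i$ in $W_2$, and $W_2$ is continuous, which yields the inequality.

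\emph{Main obstacle.} The delicate step is justifying the flow representation for the regularized field without global Lipschitz bounds. Two things must be controlled: the endpoint behaviour at $t=0,1$, since the time-regularity of $v$ is poor, and the absence of mass loss at infinity. We would first try to handle both with the estimate $\abs{v^\delta_t(x)}^2\le(\abs{v_t}^2\eta_t)*\rho_\delta/\eta^\delta_t$. If that proves too weak, we would fall back on Ambrosio's superposition principle, which represents $\eta$ as a measure on absolutely continuous curves. That route gives the same lower bound directly, with no Lipschitz requirement.
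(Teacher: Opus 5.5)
The paper gives no proof of this statement. It is recalled as the classical result of \cite{benamou2000computational}, with pointers to the standard texts, and serves only as motivation for the entropic version. Your argument is the standard Lagrangian proof found there (e.g.\ \cite[Ch.~8]{ambrosio2005gradient}), and it is correct in outline. Displacement interpolation along Brenier's map shows that the dynamic infimum is at most $W_2^2$ and is attained. Mollification, representation by characteristics and Cauchy--Schwarz in time give the reverse bound. Invoking Brenier uses $\mu_0\ll\mathcal{L}^n$, which the statement tacitly assumes by writing densities $\rho_0,\rho_1$. For general marginals one would instead interpolate along an optimal plan using the barycentric velocity, and Jensen bounds the action by $W_2^2$.

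One claim should be dropped: convolution does not give $v^\delta_t$ at most linear growth. Since $v^\delta_t(x)$ is a weighted average of $v_t$ near $x$, it inherits whatever growth $v_t$ has on distant, low-mass regions, and finite action imposes no pointwise growth restriction. Linear growth is also not needed. With a strictly positive mollifier, $\eta^\delta_t$ is bounded below on each compact $B$ uniformly in $t$, by tightness of the narrowly continuous representative. Hence $\int_0^1\bigl(\sup_B\abs{v^\delta_t}+\mathrm{Lip}(v^\delta_t,B)\bigr)\de t\le C_{B,\delta}\int_0^1\|v_t\|_{L^1(\eta_t)}\de t<\infty$. Together with $\int_0^1\int\abs{v^\delta_t}\,\de\eta^\delta_t\,\de t<\infty$, this is what the local uniqueness argument of \S 8.1 in the same reference requires. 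It shows that characteristics from $\eta^\delta_0$-a.e.\ starting point exist on all of $[0,1]$ and that $\eta^\delta_t=(X^\delta_t)_\#\eta^\delta_0$. This also settles your endpoint concern. Your superposition-principle fallback works equally well.

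For contrast, the paper proves its entropic Theorem~\ref{main_theorem} differently. It obtains the lower bound by duality rather than by flows: it tests the equation against $C^1([0,1])\cap C^2_b$ subsolutions of the Hamilton--Jacobi--Bellman equation and uses $\frac{\abs{v}^2}{2}\ge v\cdot\nabla f-\frac{\abs{\nabla f}^2}{2}$. In the optimal transport case, that route would require regularizing and truncating Hopf--Lax evolutions of Kantorovich potentials, which are only locally Lipschitz and semiconcave. Your Lagrangian route avoids this, at the price of the characteristics or superposition machinery.
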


\par\noindent As said, the previous Theorem has been firstly proved by \cite{benamou2000computational} and now it is one of the most important results of the theory (see  for instance \cite{ambrosio2021lectures,villani2008optimal,santambrogio2015optimal,ambrosio2005gradient} etc.).
Under some compactness assumptions on the marginals, the same results holds true also for the Schrödinger problem \cite{gigli2020benamou}, but the continuity equation constraint in the RHS is replaced by the Fokker--Planck equation.
 Let us then fix some notation to better comprehend this result. For every $x \in \R^n$, let us denote 
\begin{subequations}
\begin{equation}\label{eq:sol_heat_forward}
f^\varepsilon_t(x) = \begin{cases}
    f^\varepsilon(x) & \textup{if } t=0,\\
    \displaystyle \int f^\varepsilon(y)\mathsf{r}_{\varepsilon t}(x,y) \de y  & \textup{if } 0<t\le 1,
    \end{cases}
\end{equation}
\begin{equation}\label{eq:sol_heat_backward}
g^\varepsilon_t(x) = \begin{cases}
   \displaystyle\int g^\varepsilon(y)\mathsf{r}_{\varepsilon(1-t)}(x,y) \de y & \textup{if } 0\le t< 1, \\
   g^\varepsilon(x) & \textup{if } t=1,
\end{cases}
\end{equation}
\end{subequations}
namely the solutions of the heat equations
\begin{align*}
    &\begin{cases}
    \frac{\partial}{\partial t} f^\varepsilon_t - \frac{\varepsilon}{2} \Delta f^\varepsilon_t = 0 & \textup{in } \left] 0,1\right] \times \R^n, \\
    f^\varepsilon_0 = f^\varepsilon & \textup{on } \R^n,
\end{cases} & &\begin{cases}
    \frac{\partial}{\partial t} g^\varepsilon_t + \frac{\varepsilon}{2} \Delta g^\varepsilon_t = 0 & \textup{in } \left[ 0,1\right[ \times \R^n, \\
    g^\varepsilon_1 = g^\varepsilon & \textup{on } \R^n.
\end{cases}
\end{align*}
Furthermore, for all $t \in [0,1]$ let us define
\begin{equation}\label{eq:rho_mu}
    \varrho^\varepsilon_t := f_t^\varepsilon g_t^\varepsilon \qquad \textrm{and} \qquad \mu_t^\varepsilon := \varrho_t^\varepsilon \mathcal{L}^n \,,
\end{equation}
observing that $\varrho_i^\varepsilon = \varrho_i$ and $\mu_i^\varepsilon = \mu_i$, for $i=0,1$. Finally, let us denote for every $t \in \left]0,1\right[$ 
\begin{align}\label{eq:psi}
    \psi^\varepsilon_t &= \varepsilon \log g^\varepsilon_t
\end{align}
and $\psi^\varepsilon_1 (x) = \varepsilon \log g^\varepsilon(x)$ for every $x \in \supt (\mu_1)$, appropriately extended to $\R^n$ thanks to \eqref{SchoSystem}.

\begin{remark}
Since $f^\varepsilon_t, g^\varepsilon_t$ solve two heat equations, $\varrho^\varepsilon_t$ and $\psi^\varepsilon_t$ solve the following coupled system of Hamilton--Jacobi--Bellman/Fokker--Planck equations, that is
\[
\begin{cases}
\frac{\partial}{\partial t} \varrho^\varepsilon_t -\frac{\varepsilon}{2}\Delta \varrho^\varepsilon_t + \div (\varrho^\varepsilon_t \nabla \psi^\varepsilon_t) = 0 & \textup{in } \left] 0,1\right[ \times \R^n, \\
\frac{\partial}{\partial t} \psi^\varepsilon_t +\frac{\varepsilon}{2}\Delta \psi^\varepsilon_t +\frac{1}{2}\abs{\nabla \psi^\varepsilon_t}^2 =0 & \textup{in } \left[0,1\right[ \times \R^n, \\
\varrho^\varepsilon_0 = \varrho_0 & \textup{on } \R^n, \\
\varrho^\varepsilon_1 = \varrho_1 & \textup{on } \R^n, \\
\psi^\varepsilon_1 = \log g^\varepsilon & \textup{on }  \R^n.
\end{cases}\] \fr
\end{remark} 

Then, the main results in \cite{gigli2020benamou} can be summarized as follows.

\begin{theorem}
For every $\mu_0 = \varrho_0 \mathcal{L}^n, \mu_1 = \varrho_1 \mathcal{L}^n \in \mathcal{P}_2(\R^n)$ with bounded densities and supports, it holds
\begin{align*}
\varepsilon &\underset{\gamma \in \Gamma(\mu_0,\mu_1)}{\min} H(\gamma \mid \mathsf{R}_\varepsilon ) = \varepsilon H(\mu_0 \mid \mathcal{L}^n) \\
&+\min \left\lbrace \int_0^1 \int \frac{\abs{v_t}^2}{2}\eta_t \de\mathcal{L}^n \de t: \frac{\partial}{\partial t} \eta_t -\frac{\varepsilon}{2}\Delta \eta_t +\div(v_t \eta_t) =  0, \eta_0\mathcal{L}^n = \mu_0, \eta_1\mathcal{L}^n = \mu_1\right\rbrace,
\end{align*}
where the Fokker--Planck equation has to be understood in distributional sense.

In particular, on the right-hand side, a minimizer is $(\mu^\varepsilon_t,\nabla\psi^\varepsilon_t)$. 
\end{theorem}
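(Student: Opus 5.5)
The plan has two parts: show that $(\mu^\varepsilon_t,\nabla\psi^\varepsilon_t)$ is admissible and attains the left-hand side, and then show that no other admissible pair does better. Throughout we use the compactness assumptions (bounded densities and supports). These make $f^\varepsilon_t,g^\varepsilon_t$ smooth and strictly positive for $t\in\left]0,1\right[$. By the Schrödinger system \eqref{SchoSystem}, $f^\varepsilon$ and $g^\varepsilon$ can be taken bounded on the supports. Hence $\psi^\varepsilon_t$ and $\nabla\psi^\varepsilon_t$ have at most quadratic and linear growth, while $\varrho^\varepsilon_t$ decays like a Gaussian. This justifies every integration by parts below on $[\delta,1-\delta]$; we then let $\delta\to 0$.

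\textbf{Value of the candidate.} By the Remark, $(\varrho^\varepsilon_t,\nabla\psi^\varepsilon_t)$ solves the Fokker--Planck equation with the correct endpoints. Set $\varphi^\varepsilon_t=\varepsilon\log f^\varepsilon_t$, so that $\varepsilon\log\varrho^\varepsilon_t=\varphi^\varepsilon_t+\psi^\varepsilon_t$. The idea is to differentiate $t\mapsto\int\psi^\varepsilon_t\varrho^\varepsilon_t$ using the HJB and Fokker--Planck equations. Integrating by parts, the Laplacian terms cancel and we get
\[
\frac{\de}{\de t}\int\psi^\varepsilon_t\varrho^\varepsilon_t=\int\frac{|\nabla\psi^\varepsilon_t|^2}{2}\varrho^\varepsilon_t .
\]
Integrating in time and passing to the endpoints gives
\[
\int_0^1\!\!\int\frac{|\nabla\psi^\varepsilon_t|^2}{2}\varrho^\varepsilon_t=\int\psi^\varepsilon_1\,\de\mu_1-\int\psi^\varepsilon_0\,\de\mu_0 .
\]
On the other side, $\gamma^\varepsilon=(f^\varepsilon\otimes g^\varepsilon)\mathsf R_\varepsilon$ gives
\[
\varepsilon H(\gamma^\varepsilon\mid\mathsf R_\varepsilon)=\int\varepsilon\log f^\varepsilon\,\de\mu_0+\int\psi^\varepsilon_1\,\de\mu_1 .
\]
At $t=0$ we have $\varepsilon\log f^\varepsilon=\varepsilon\log\varrho_0-\psi^\varepsilon_0$, so the first term equals $\varepsilon H(\mu_0\mid\mathcal L^n)-\int\psi^\varepsilon_0\,\de\mu_0$. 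This is exactly the claimed identity for the candidate.

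\textbf{Lower bound.} Let $(\eta_t,v_t)$ be admissible with finite action. The plan is to mimic the computation above with $\psi^\varepsilon_t$ as a test function. First regularize $\eta$, e.g.\ by space convolution plus a cutoff; convolution preserves the Fokker--Planck structure with a velocity whose action is not larger. Using $\psi^\varepsilon_t$ as test function gives
\[
\int\psi^\varepsilon_1\,\de\mu_1-\int\psi^\varepsilon_0\,\de\mu_0=\int_0^1\!\!\int\Bigl(\partial_t\psi^\varepsilon_t+\frac\varepsilon2\Delta\psi^\varepsilon_t+\nabla\psi^\varepsilon_t\cdot v_t\Bigr)\eta_t .
\]
By the HJB equation the integrand equals $-\tfrac12|\nabla\psi^\varepsilon_t|^2+\nabla\psi^\varepsilon_t\cdot v_t$, and Young's inequality bounds this by $\tfrac12|v_t|^2$. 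Hence the action of $(\eta,v)$ is at least $\int\psi^\varepsilon_1\,\de\mu_1-\int\psi^\varepsilon_0\,\de\mu_0$, which is the value attained by the candidate. Alternatively, the lower bound follows from a superposition principle, which gives $\varepsilon H(\gamma\mid\mathsf R_\varepsilon)\le\varepsilon H(\mu_0)+\text{action}$ for the endpoint law $\gamma$ of the associated diffusion via Girsanov. I prefer the PDE test-function route.

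\textbf{Main obstacle.} The main difficulty is justifying $\psi^\varepsilon_t$ as a test function, because it is neither compactly supported nor regular up to $t=0,1$. The bound $|\nabla\psi^\varepsilon_t|\le C(1+|x|)$ together with finite action gives integrability of $\nabla\psi^\varepsilon_t\cdot v_t\,\eta_t$ provided $\eta_t$ has uniformly bounded second moments. This moment bound follows from the Fokker--Planck equation and finite action by a Grönwall argument. The plan is therefore:
\begin{enumerate}
\item Test with $\chi_R\psi^\varepsilon_t$ on $[\delta,1-\delta]$, where $\chi_R$ is a cutoff.
\item Send $R\to\infty$ using the growth bounds and the second-moment bound.
\item Send $\delta\to0$ using narrow continuity of $t\mapsto\eta_t$ and the convergence $\psi^\varepsilon_t\to\psi^\varepsilon_0,\psi^\varepsilon_1$, which is locally uniform on the supports.
\end{enumerate}
Near $t=1$, $\psi^\varepsilon_t$ may blow up outside $\supt\mu_1$. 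There I would use that $g^\varepsilon_t$ is a heat semigroup of a function bounded on a compact set, which gives a one-sided bound $\psi^\varepsilon_t\le C$. Combined with Fatou's lemma, this yields the inequality in the right direction.
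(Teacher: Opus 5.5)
Your computation of the value of $(\mu^\varepsilon_t,\nabla\psi^\varepsilon_t)$ is the paper's Proposition~\ref{fluidnotvar}. Testing the Fokker--Planck equation with an HJB solution and then applying Young's inequality is exactly the duality step in the proof of Theorem~\ref{main_theorem}.

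\textbf{The gap is in your step 3 at $t=1$.} After steps 1--2 you have, for every $\delta$,
\[
\int\psi^\varepsilon_{1-\delta}\,\de\eta_{1-\delta}-\int\psi^\varepsilon_{\delta}\,\de\eta_{\delta}\le\int_0^1\!\!\int\frac{\abs{v_t}^2}{2}\eta_t\,\de\mathcal L^n\,\de t .
\]
To conclude you therefore need $\limsup_{\delta\to0}\int\psi^\varepsilon_\delta\,\de\eta_\delta\le\int\psi^\varepsilon_0\,\de\mu_0$ and $\liminf_{\delta\to0}\int\psi^\varepsilon_{1-\delta}\,\de\eta_{1-\delta}\ge\int\psi^\varepsilon_1\,\de\mu_1$.

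The bound $\psi^\varepsilon_t\le\varepsilon\log\|g^\varepsilon\|_\infty$ together with Fatou gives upper semicontinuity. That settles $t=0$, where $\psi^\varepsilon_t\to\psi^\varepsilon_0$ locally uniformly on all of $\R^n$ because $g^\varepsilon_0$ is smooth and positive. At $t=1$, however, the term enters with a plus sign, so this is the wrong direction. The lower bound you would need is not available: off the support, $\psi^\varepsilon_{1-\delta}(x)\approx-\mathrm{dist}(x,\supt\mu_1)^2/(2\delta)$, and an arbitrary competitor $\eta_{1-\delta}$ may put mass there.

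Your claim that $\psi^\varepsilon_t\to\psi^\varepsilon_1$ locally uniformly on $\supt\mu_1$ is also false. The function $g^\varepsilon=\varrho_1/f^\varepsilon_1$ is only bounded measurable and may vanish on a positive-measure part of the support. So the convergence holds only a.e., and that cannot be combined with the merely narrow convergence $\eta_{1-\delta}\to\mu_1$. In the value computation this issue was harmless, because $\varrho^\varepsilon_{1-\delta}=f^\varepsilon_{1-\delta}g^\varepsilon_{1-\delta}$ carries a pointwise structure. A general competitor has nothing of the sort.

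\textbf{How to repair it.} As in the paper, test only with functions in $C^1([0,1])\cap C^2_b(\R^n)$ on the closed interval. Then Remark~\ref{weakeningtestfunctions} applies, and the endpoint terms are integrals against $\mu_0$ and $\mu_1$ themselves. One then approximates $\psi^\varepsilon$; the paper's Lemma~\ref{density} uses $\varepsilon\log$ of the heat flow of $e^{\psi^\varepsilon_1/\varepsilon}\zeta_R+1/R$. In your compact setting, for $\tau,\theta>0$ take
\[
\phi_t(x)=\varepsilon\log\Bigl(\int g^\varepsilon(y)\,\mathsf r_{\varepsilon(1+\tau-t)}(x,y)\,\de y+\theta\Bigr).
\]

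1. This $\phi$ is a bounded, exact HJB solution with bounded derivatives on $[0,1]\times\R^n$. Young's inequality gives that the action is at least $\int\phi_1\,\de\mu_1-\int\phi_0\,\de\mu_0$.
2. Let $\tau\to0$. Dominated convergence applies since $\varepsilon\log\theta\le\phi\le\varepsilon\log(\|g^\varepsilon\|_\infty+\theta)$, the heat flow converges a.e., and $\mu_1\ll\mathcal L^n$.
3. Let $\theta\downarrow0$. At $t=1$ use $\log(g^\varepsilon+\theta)\ge\log g^\varepsilon$. At $t=0$ use uniform convergence on the compact set $\supt\mu_0$.

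This yields the lower bound. With this choice you no longer need the spatial cut-offs, the Gr\"onwall second-moment bound, or the regularization of $\eta$.
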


\paragraph{Fokker--Planck pairs} In order to better understand in which sense the solution to Fokker--Planck equation has to be understood, we need to introduce some cut-off functions (note that we will use the same ones in the proof of the main result). 
Let $\lambda : [1,2] \to [0,1]$ be defined as
\[
\lambda (r) = \frac{e^{-\frac{1}{2-r}}}{e^{-\frac{1}{2-r}} +e^{-\frac{1}{r-1}}} \,, \qquad r \in\, ]1,2[
\]
and extended by continuity at $r=1,2$. As $\lambda$ is smooth, there exist $\Lambda \in \R$ such that for every $r \in [1,2]$
\[
\lambda'(r),\lambda''(r) \le \Lambda.
\]
Given $R>1$, consider the smooth function $\zeta_R : \R^n \to [0,1]$ such that
\begin{equation}\label{eq:cut_off}
\zeta_R(x) = \begin{cases}
    1 & \textup{ if } \abs{x} \le R,\\
    \displaystyle \lambda \left( \frac{\abs{x}}{R}\right) & \textup{ if } R < \abs{x} \le 2R,\\
    0 & \textup{ if } \abs{x} > 2R.
\end{cases}
\end{equation}
and observe that its gradient is
\[ \nabla\zeta_R(x) = \begin{cases}
    0 & \textup{ if } \abs{x} \le R,\\
    \displaystyle \frac{1}{R}\lambda' \left( \frac{\abs{x}}{R}\right) \frac{x}{\abs{x}} & \textup{ if } R < \abs{x} \le 2R,\\
    0 & \textup{ if } \abs{x} > 2R,
\end{cases} \]
and its Laplacian is
\[\Delta \zeta_R(x) = \begin{cases}
    0 & \textup{ if } \abs{x} \le R,\\
    \displaystyle \frac{1}{R^2}\lambda'' \left( \frac{\abs{x}}{R}\right)  +  \frac{n-1}{\abs{x}R}\lambda' \left( \frac{\abs{x}}{R}\right) & \textup{ if } R < \abs{x} \le 2R,\\
    0 & \textup{ if } \abs{x} > 2R.
\end{cases}\]
We need a convenient definition of distributional solution of the Fokker--Planck equation.

\begin{definition}\textbf{(distributional solution of Fokker--Planck equation)}
Let $v : [0,1] \times \R^n \to \R^n$ be a Borel map and $c>0$. We say that a curve $\nu \in C([0,1];\mathcal{P}(\R^n))$ is a \emph{distributional solution} of the \emph{Fokker--Planck equation} with velocity $v$, namely
\[ 
\frac{\partial}{\partial t} \nu-c\Delta \nu+\div(v \nu) =  0, 
\]
provided the following facts:
\begin{enumerate}[$(i)$]
    \item for every $t \in [0,1]$, $|v_t| \in L^1(\nu_t)$,
    \item the function $\left\lbrace t \mapsto \|v_t\|_{L^1(\nu_t)} \right\rbrace$ belongs to $L^1(0,1)$,
    \item for every $f \in C^1([0,1])\cap C^\infty_c(\R^n)$, it holds
\begin{equation}\label{eq:fokker-planck}
\int_{0}^{1} \int \left( \frac{\partial}{\partial t} f_t + c\Delta f_t + v_t\cdot \nabla f_t \right) \de\nu_t \de t= \int f_1 \de\nu_1 - \int f_0 \de\nu_0.
\end{equation}
\end{enumerate}
In particular, the pair $(\nu_t,v_t)$, where $\nu_t$ is a distributional solution of the forward Fokker--Planck equation defined by $v_t$, is called \emph{Fokker--Planck pair}.
\end{definition}

We observe that $(\mu^\varepsilon_t,\nabla\psi^\varepsilon_t)$ is a Fokker--Planck pair. 

\begin{remark}\label{weakeningtestfunctions}
It is worth noting that the class of test functions used in \eqref{eq:fokker-planck} can be extended from $C^1([0,1])\cap C^\infty_c(\R^n)$ to $C^1([0,1])\cap C_b^2(\R^n)$, that is if $(\nu_t,v_t)$ is a Fokker--Planck pair, then \eqref{eq:fokker-planck} holds true for all $f \in C^1([0,1])\cap C_b^2(\R^n)$. Passing from $C_c^\infty(\R^n)$ to $C_c^2(\R^n)$ is effortless, thanks to a mollification argument. Let us then explain how to pass from $C_c^2(\R^n)$ to $C_b^2(\R^n)$. Take $f \in C^1([0,1])\cap C_b^2(\R^n)$, consider, for $R>1$,
\[ f_R(t,x) = f(t,x)\zeta_R(x)\,,\]
where $\zeta_R$ is as in \eqref{eq:cut_off}, and use it as a test function. This gives
\begin{align*}
\int f_1\zeta_R \de\nu_1 -& \int f_0 \zeta_R \de\nu_0 - \int_0^1\int \zeta_R\left(\frac{\partial}{\partial t}f_t + c \Delta f_t + \nabla f_t \cdot v_t \right)\de\nu_t \de t \\
& = \int_0^1\int \left(cf_t\Delta\zeta_R + 2c \nabla \zeta_R \cdot \nabla f_t + f_tD \zeta_R \cdot v_t \right) \de\nu_t \de t. %=\\
%& = \int_0^1\int \left(\frac{c}{R^2}f_t\Delta\lambda + \frac{2c}{R}(\nabla\lambda)\left(\frac{x}{R}\right) \cdot \nabla f_t + \frac{f_t}{R} (\nabla\lambda)\left(\frac{x}{R}\right) \cdot v_t  \right) \de\nu_tdt.
\end{align*}
The left-hand side converges to a rearrangement of \eqref{eq:fokker-planck}, since $\zeta_R \to 1$ as $R \to +\infty$ and the right-hand side vanishes in the limit, since $\Delta\zeta_R, \nabla \zeta_R$ and $f$ are bounded by a constant and $\Delta\zeta_R, \nabla \zeta_R \to 0$ as $R \to +\infty$. \fr
\end{remark}

\section{The choice of marginals and the main result}\label{choice} 
As mentioned in the Introduction, one of the key points of the proof strategy in \cite{gigli2020benamou} is the boundedness of $g^\varepsilon$. This easily follows if both marginals have bounded and compactly supported densities. However, if we drop the compactness assumption on the supports, then $g^\varepsilon$ may fail to be bounded (even if so are both marginals' densities), as shown in the following example.

\begin{example}\label{counterexampleboundednesspsi}
Consider $\mu_0,\mu_1 \in \mathcal{P}_2(\R)$ defined as
\[
\mu_0 := \frac{1}{\sqrt{2\pi}}e^{-\frac{(x-1)^2}{2}} \mathcal{L}^1 \,, \qquad \mu_1 := \frac{1}{2\sqrt{\pi}}e^{-\frac{(y-2)^2}{4}} \mathcal{L}^1
\]
and $f,g : \R \to \left[0,+\infty\right[$ defined as
\[
f(x) := \frac{1}{\sqrt{2\pi}}e^{-\frac{x^2}{2}} \,, \qquad g(y) := e^{y-1} \,.
\]
We claim that the unique solution to \eqref{eq:SPstatic} with the above marginals and $\varepsilon=1$ is $\gamma := (f \otimes g)\mathsf{R}_{1}$. Indeed
\begin{align*}
    \frac{1}{\sqrt{2\pi}}e^{-\frac{x^2}{2}} \int_{-\infty}^{+\infty} e^{y-1}\frac{1}{\sqrt{2\pi}}e^{-\frac{(x-y)^2}{2}}\de y &= \frac{1}{2\pi} e^{-x^2 -1+\frac{(x+1)^2}{2}} \int_{-\infty}^{+\infty} e^{-\frac{(y-(x+1))^2}{2}}\de y \\
    &=\frac{1}{\sqrt{2\pi}}e^{-\frac{(x-1)^2}{2}}
\end{align*}
and 
\begin{align*}
    e^{y-1} \int_{-\infty}^{+\infty} \frac{1}{\sqrt{2\pi}}e^{-\frac{x^2}{2}}\frac{1}{\sqrt{2\pi}}e^{-\frac{(x-y)^2}{2}}\de x &= \frac{1}{2\pi} e^{-\frac{1}{4}y^2+y-1} \int_{-\infty}^{+\infty} e^{-(x-\frac{1}{2}y)^2}\de x = \frac{1}{2\sqrt{\pi}}e^{-\frac{(y-2)^2}{4}}.
\end{align*}
This shows that $\gamma \in \Gamma(\mu_0,\mu_1)$ and by \cite[Theorem 2.1]{nutz2021introduction}, this is sufficient to conclude that $\gamma$ is the Schr\"odinger plan between $\mu_0$ and $\mu_1$. \fr
\end{example}

%The previous example is a case of Gaussian marginals, so we already know the validity of a fluid dynamical, but not variational, representation formula (see \cite{mallasto2022entropy} for details). Notice that this is not in the framework of \cite{gigli2020benamou}, since Gaussians do not have compact support. The key point here is that boundedness of the marginals is not inherited by the decomposition of the solution: in fact, in this case, $g^\varepsilon$ is not bounded. In order to preserve the derivation under the integral sign argument, used in \cite{gigli2020benamou}, we need to find another way. 

%In addition, in order to choose a suitable, but sufficiently general, framework for the marginals, we have also to take into account the following case of Gaussian mixture marginals in which a fluid dynamical, but not variational, representation can be explicitly built. Again $\varepsilon = 1$ in order to simplify the computations.

Nonetheless, the fact that $g^\varepsilon$ may be unbounded is not an obstruction in itself. What is needed to replicate the proof of \cite{gigli2020benamou} is a suitable integrability property of the absolute value of the time derivative of $\psi_t^\varepsilon \varrho_t^\varepsilon$: therefore, even if $\psi_t^\varepsilon$ is unbounded, it is sufficient that $\varrho_t^\varepsilon$ compensates for its growth. This is indeed what happens in the next example, where we consider two marginals that are Gaussian mixtures (thus demonstrating that proper integrability also exists outside the Gaussian framework).

\begin{example}
Consider $\mu_0,\mu_1 \in \mathcal{P}_2(\R)$ defined as
\[
\mu_0 := \frac{1}{2\sqrt{2\pi}}\left(e^{-\frac{(x-1)^2}{2}} +e^{-\frac{(x+1)^2}{2}}\right) \mathcal{L}^1, \qquad \mu_1 :=  \frac{1}{4\sqrt{\pi}}\left(e^{-\frac{(y-2)^2}{4}} +e^{-\frac{(y+2)^2}{4}}\right) \mathcal{L}^1
\]
and $f,g : \R \to \left[0,+\infty\right[$ given by
\[
f(x) = \frac{1}{\sqrt{2\pi}}e^{-\frac{x^2}{2}} \,, \qquad g(y) = \frac{1}{2}\left( e^{y-1}+e^{-y-1}\right) \,.
\]
Then, arguing as in Example \ref{counterexampleboundednesspsi}, the unique solution to \eqref{eq:SPstatic} with the above marginals and $\varepsilon = 1$ is $\gamma = (f \otimes g)\mathsf{R}_{1}$. Given that, a direct computation provides
\[
f_t(x) = \frac{1}{\sqrt{2\pi(1+t)}}e^{-\frac{x^2}{2(1+t)}} \,, \qquad g_t(x) = \frac{1}{2}\left(e^{x-\frac{1+t}{2}}+ e^{-x-\frac{1+t}{2}}\right) \,,
\]
whence
\[\varrho_t(x) = \frac{1}{2\sqrt{2\pi(t+1)}}\left(e^{-\frac{(x-(1+t))^2}{2(1+t)}}+e^{-\frac{(x+(1+t))^2}{2(1+t)}}\right) \]
and
\[ \psi_t(x) = -\log 2 + \log \left(e^{x-\frac{1+t}{2}}+ e^{-x-\frac{1+t}{2}}\right).\]
Now, for every $t \in [0,1]$, the function $x \mapsto \psi_t(x)\varrho_t(x)$ is integrable and, for every $x \in \R$, the function $t \mapsto \psi_t(x)\varrho_t(x)$ is differentiable with
%, with 
% \begin{align*}
%     \frac{\partial}{\partial t} \left( \psi_t(x) \varrho_t(x) \right) &= \frac{\partial}{\partial t} \psi_t(x)\varrho_t(x) + \psi_t(x) \frac{\partial}{\partial t} \varrho_t(x) = \\
%     &=-\frac{1}{2}\varrho_t(x) + \psi_t(x) \left[ \frac{1}{\sqrt{2\pi}}\frac{\partial}{\partial t} \left( \frac{1}{\sqrt{1+t}}\left(e^{-\frac{(x-(1+t))^2}{2(1+t)}}+e^{-\frac{(x+(1+t))^2}{2(1+t)}}\right)\right)\right] = \\
%     &= -\frac{1}{2}\varrho_t(x) + \frac{\psi_t(x)}{(1+t)2\sqrt{2\pi(1+t)}}\left(e^{-\frac{(x-(1+t))^2}{2(1+t)}}+e^{-\frac{(x+(1+t))^2}{2(1+t)}}\right) + \\
%     &+\frac{\psi_t(x)}{2\sqrt{2\pi(1+t)}} \frac{\partial}{\partial t} \left(e^{-\frac{(x-(1+t))^2}{2(1+t)}}+e^{-\frac{(x+(1+t))^2}{2(1+t)}} \right) = \\
%     &= -\frac{1}{2}\varrho_t(x) + \frac{\psi_t(x)}{(1+t)}\varrho_t(x) + \\
%     &-\frac{\psi_t(x)}{2\sqrt{2\pi(1+t)}} \left[e^{-\frac{(x-(1+t))^2}{2(1+t)}} \frac{-4(x-(1+t))(1+t)-2(x-(1+t))^2}{4(1+t)^2} \right. + \\
%     &\left. + e^{-\frac{(x+(1+t))^2}{2(1+t)}}\frac{4(x+(1+t))(1+t)-2(x+(1+t))^2}{4(1+t)^2} \right] = \\
%     &= -\frac{1}{2}\varrho_t(x) + \frac{\psi_t(x)}{(1+t)}\varrho_t(x) - \frac{\psi_t(x)\left(-2x^2-6(1+t)^2\right)}{8(1+t)^2\sqrt{2\pi(1+t)}}e^{-\frac{(x-(1+t))^2}{2(1+t)}} +\\
%     &-\frac{\psi_t(x)\left(-2x^2+2(1+t)^2\right)}{8(1+t)^2\sqrt{2\pi(1+t)}}e^{-\frac{(x+(1+t))^2}{2(1+t)}}.
% \end{align*}
\[\abs*{\frac{\partial}{\partial t} \left( \psi_t(x) \varrho_t(x) \right)} \le 4 e^{-\frac{x^2}{100}} \,, \qquad \forall x \in \R.\]
We can thus differentiate under the integral sign and find, using the equations that $\psi_t$ and $\varrho_t$ solve and integration by parts,
\begin{align*}
    \frac{\de}{\de t} \int_{-\infty}^{+\infty} \psi_t \varrho_t  \de\mathcal{L}^1 &= \int_{-\infty}^{+\infty} \left( \rho_t\frac{\partial}{\partial t}\psi_t + \psi_t \frac{\partial}{\partial t} \varrho_t \right) \de\mathcal{L}^1 \\
    &= -\frac{1}{2} \int_{-\infty}^{+\infty} \Delta \psi_t \varrho_t \de\mathcal{L}^1 -\frac{1}{2}\int_{-\infty}^{+\infty} \abs*{\nabla \psi_t}^2\varrho_t \de\mathcal{L}^1 \\
    & \quad+\frac{1}{2}\int_{-\infty}^{+\infty} \psi_t \Delta\varrho_t \de\mathcal{L}^1 -\int_{-\infty}^{+\infty} \psi_t \operatorname{div} \left(\varrho_t \nabla\psi_t\right) \de\mathcal{L}^1 \\
    & = \frac{1}{2} \int_{-\infty}^{+\infty} \abs*{\nabla \psi_t}^2 \varrho_t \de\mathcal{L}^1 \,.
\end{align*}
If we further integrate both sides w.r.t.\ $t \in [0,1]$, we find
\[\int \psi_1 \de\mu_1 - \int \psi_0 \de\mu_0 = \int_0^1\int \frac{\abs{\nabla \psi_t}^2}{2}\varrho_t \de\mathcal{L}^1 \de t. \]
Finally, noticing that $\psi_0 = \log \varrho_0 - \log f$ and $\psi_1 = \log g$, and using also the marginal constraints on $\gamma$, we arrive at 
\[ H(\gamma \mid \mathsf{R}_1) =  H(\mu_0 \mid \mathcal{L}^1)+\int_0^1 \int \frac{\abs{\nabla\psi_t}^2}{2}\de\mu_t \de t.\] \fr
\end{example}

With this example in mind we are now ready to get the right assumptions for our framework: the idea is to preserve the property of having very little mass at infinity instead of the strongest assumption of the compact support, in order to obtain the necessary integrability.

\begin{assumptions}
\label{assumptions}
Suppose $\mu_0,\mu_1 \in \mathcal{P}_2(\R^n)$ have the form 
\begin{align*}
    \mu_0 &= e^{-V_0} \mathcal{L}^n, & \mu_1 &= e^{-V_1} \mathcal{L}^n,
\end{align*}
with $V_0, V_1 \in C^2(\R^n)$ for which there exist $C>0$ such that, for every $x,\xi \in \R^n$,
\begin{align*}
    C^{-1}\abs*{\xi}^2 \le \xi \cdot D^2 V_0(x) \xi  \le C \abs*{\xi}^2, \\ 
    C^{-1}\abs*{\xi}^2 \le \xi \cdot D^2 V_1(x) \xi \le C \abs*{\xi}^2.
\end{align*} 
\end{assumptions}

\begin{remark}
It is possible to include more general cases in our framework if we consider $\mu_0,\mu_1 \in \mathcal{P}_2(\R^n)$ having the form 
\begin{align*}
    \mu_0 = \begin{cases}
        e^{-V_0} \mathcal{L}^n & \textup{if } V_0 \textup{ is defined},\\
        0 & \textup{otherwise},
    \end{cases}  & & \mu_1 &= \begin{cases}
        e^{-V_1} \mathcal{L}^n & \textup{if }V_1 \textup{ is defined},\\
        0 & \textup{otherwise},
    \end{cases}
\end{align*}
with $V_0 \in C^2(\textup{dom}(V_0)), V_1 \in C^2(\textup{dom}(V_1))$, $\textup{dom}(V_0), \textup{dom}(V_1) \subseteq \R^n$ non-empty, possibly unbounded and with a finite number of convex connected components, for which there exist $C>0$ such that
\begin{align*}
    C^{-1}\abs*{\xi}^2 \le \xi \cdot D^2 V_0(x) \xi  \le C \abs*{\xi}^2, \qquad \textup{for every } x \in \textup{dom}(V_0), \xi \in \R^n, \\ 
    C^{-1}\abs*{\xi}^2 \le \xi \cdot D^2 V_1(x) \xi \le C \abs*{\xi}^2,\qquad \textup{for every } x \in \textup{dom}(V_1), \xi \in \R^n.
\end{align*}
With this slightly more general framework, it is possible to recover the case of compactly supported marginals, namely the result in \cite{gigli2020benamou}, and also manage the case of marginals with non-connected supports. The difference in the proof is minor: while the underlying strategy remains unchanged, one should be more careful when applying Taylor expansions and on writing the domain of $\psi^\varepsilon_1$. It is merely a matter of expanding on technical details that are not essential to the true core of the proof. Therefore, for the sake of presentation we prefer to adopt Assumptions \ref{assumptions} throughout the whole manuscript. \fr
\end{remark}

%First of all, in an attempt of completeness, we can ensure, at least, well definiteness of the Schrödinger problem and existence, uniqueness, and the decomposition of the solution: since the reference measure $\mathsf{R}_\varepsilon$ is unbounded, the proof is basically the checking of the well-definiteness of the entropy functional and, according to \cite[Proposition $2.1$]{gigli2021second} and \cite{leonard2013survey}, of the validity of $H(\mu_0 \otimes \mu_1 \mid \mathsf{R}_\varepsilon) < +\infty$.

First of all, let us verify that under these assumptions existence, uniqueness, and decomposition of the solution to \eqref{eq:SPstatic} hold true.  

\begin{proposition}\label{Setting}
Under Assumptions \ref{assumptions}, for every $\varepsilon >0$ there exists a unique solution $\gamma^\varepsilon$ to \eqref{eq:SPstatic}. 
%the entropy functional is well defined\todo{e' ben definito dove?} and $H(\mu_0 \otimes \mu_1 \mid \mathsf{R}_\varepsilon) < +\infty$. Moreover, there exists a unique $\gamma^\varepsilon \in \Gamma(\mu_0,\mu_1)$ such that 
%\[ H(\gamma^\varepsilon \mid \mathsf{R}_{\varepsilon}) = \min \left\lbrace H(\gamma \mid \mathsf{R}_{\varepsilon}): \gamma \in \Gamma(\mu_0,\mu_1)\right\rbrace,\]
%where 
%\[\Gamma(\mu_0,\mu_1) = \left\lbrace \gamma \in \mathcal{P}(\R^n\times \R^n) : (p^1)_\# \gamma = \mu_0,  (p^2)_\# \gamma = \mu_1\right\rbrace. \]
In particular, there exist two Borel functions $f^\varepsilon,g^\varepsilon: \R^n \to \left[ 0,+\infty\right[ $, unique \emph{a.e.} in $\R^n $ up to a rescaling $(f,g) \mapsto (cf,\frac{g}{c})$ for some $c>0$, such that 
	\[ \gamma^\varepsilon = (f^\varepsilon \otimes g^\varepsilon)\mathsf{R}_{\varepsilon}. \]
\end{proposition}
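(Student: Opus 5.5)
The plan is to reduce the statement to the general criterion recalled in Section \ref{Framework}: by \cite[Proposition 2.1]{gigli2021second}, existence, uniqueness and the factorization $\gamma^\varepsilon=(f^\varepsilon\otimes g^\varepsilon)\mathsf{R}_\varepsilon$ all follow once we know that $H(\mu_0\otimes\mu_1\mid\mathsf{R}_\varepsilon)<+\infty$. Uniqueness of $(f^\varepsilon,g^\varepsilon)$ up to the rescaling $(f,g)\mapsto(cf,g/c)$ is then the uniqueness of solutions of the Schrödinger system \eqref{SchoSystem}, for which we appeal to \cite[Section 2]{ruschendorf1995convergence} or \cite[Theorem 2.1]{nutz2021introduction}. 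So the whole argument reduces to checking this finite-entropy condition under Assumptions \ref{assumptions}. We also note that $\mu_0\otimes\mu_1\in\Gamma(\mu_0,\mu_1)$, so the infimum is not $+\infty$.

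Since $\mu_0\otimes\mu_1\ll\mathsf{R}_\varepsilon$ with density $e^{-V_0(x)-V_1(y)}/\mathsf{r}_\varepsilon(x,y)$, we compute
\[
H(\mu_0\otimes\mu_1\mid\mathsf{R}_\varepsilon)=-\int V_0\,\de\mu_0-\int V_1\,\de\mu_1+\frac n2\log(2\pi\varepsilon)+\frac{1}{2\varepsilon}\int\!\!\int\abs{x-y}^2\,\de\mu_0(x)\,\de\mu_1(y).
\]
Each term should be checked to be finite; one also has to confirm that the integrand's positive and negative parts are not both infinite, which follows once every piece is shown to be in $L^1$.

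\begin{itemize}
\item The last term is bounded by $\varepsilon^{-1}\bigl(\int\abs{x}^2\de\mu_0+\int\abs{y}^2\de\mu_1\bigr)$, which is finite because $\mu_i\in\mathcal P_2(\R^n)$.
\item For the terms $\int V_i\,\de\mu_i$, we use Taylor's formula together with the two-sided Hessian bound. This gives $V_i(x_i)+\nabla V_i(x_i)\cdot(x-x_i)+\frac{1}{2C}\abs{x-x_i}^2\le V_i(x)\le V_i(x_i)+\nabla V_i(x_i)\cdot(x-x_i)+\frac C2\abs{x-x_i}^2$ around any fixed point $x_i$.
\item Hence $\abs{V_i(x)}\le a+b\abs{x}^2$ for suitable constants $a,b$, and integrability against $\mu_i\in\mathcal P_2$ follows.
\end{itemize}

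The lower bound also shows that $e^{-V_i}$ has Gaussian decay, which is consistent with $\mu_i$ being a probability measure. It implies that the entropies $H(\mu_i\mid\mathcal L^n)=-\int V_i\,\de\mu_i$ are finite.

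There is no real obstacle here. The only point requiring care is the $L^1$ bookkeeping just described, so that splitting the logarithm into the displayed sum is legitimate, and the quadratic bound on $V_i$ handles this.
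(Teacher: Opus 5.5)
Your route is the same as the paper's: invoke \cite[Proposition 2.1]{gigli2021second} and check $H(\mu_0\otimes\mu_1\mid\mathsf{R}_\varepsilon)<+\infty$ by splitting the log-density into $-V_0(x)-V_1(y)+\frac n2\log(2\pi\varepsilon)+\frac{\abs{x-y}^2}{2\varepsilon}$. That computation is correct. It is also more careful than the paper's: your bound $\abs{V_i(x)}\le a+b\abs{x}^2$ justifies the finiteness of $H(\mu_i\mid\mathcal L^n)$, which the paper leaves implicit.

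The gap is in the reduction step. You say the cited proposition needs only $H(\mu_0\otimes\mu_1\mid\mathsf{R}_\varepsilon)<+\infty$. As the paper's proof states explicitly, it also requires a Borel function $B\colon\R^n\to[0,+\infty[$ with
\[
\int e^{-B(x)-B(y)}\,\de\mathsf{R}_\varepsilon(x,y)<+\infty,\qquad \int B\,\de\mu_0<+\infty,\qquad \int B\,\de\mu_1<+\infty,
\]
and you never check this. It is not a formality. $\mathsf{R}_\varepsilon$ has infinite total mass, so $\gamma\mapsto H(\gamma\mid\mathsf{R}_\varepsilon)$ is not a priori well defined or bounded below on $\Gamma(\mu_0,\mu_1)$.

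The condition on $B$ is what fixes this. Set $z=\int e^{-B\oplus B}\,\de\mathsf{R}_\varepsilon$ and let $\tilde{\mathsf R}=z^{-1}e^{-B\oplus B}\mathsf{R}_\varepsilon$, a probability measure. Then for every $\gamma\in\Gamma(\mu_0,\mu_1)$,
\[
H(\gamma\mid\mathsf{R}_\varepsilon)=H(\gamma\mid\tilde{\mathsf R})-\int B\,\de\mu_0-\int B\,\de\mu_1-\log z .
\]
This identity gives the lower bound and the weak lower semicontinuity on the compact set $\Gamma(\mu_0,\mu_1)$, and hence a minimizer. Finiteness at the single competitor $\mu_0\otimes\mu_1$ only shows that the infimum is not $+\infty$.

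The fix is immediate with the paper's choice $B(x)=\abs{x}^2$. Since $\int\mathsf{r}_\varepsilon(x,y)\,\de y=1$, you get $\int e^{-\abs{x}^2-\abs{y}^2}\,\de\mathsf{R}_\varepsilon\le\int e^{-\abs{x}^2}\,\de x<+\infty$. Also $\int\abs{x}^2\,\de\mu_i<+\infty$ because $\mu_i\in\mathcal P_2(\R^n)$. With this added, your argument is complete.
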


\begin{proof}
According to \cite[Proposition $2.1$]{gigli2021second}, if we exhibit a Borel function $B: \R^n\times \R^n \to \left[ 0,+\infty\right[ $ such that
\begin{equation}\label{eq:technical}
\int e^{-B(x)-B(y)}\de\mathsf{R}_\varepsilon(x,y) < +\infty, \qquad \int B \de\mu_0 <+\infty, \qquad  \int B \de\mu_1 <+\infty
\end{equation} 
and show that $H(\mu_0 \otimes \mu_1 \mid \mathsf{R}_\varepsilon) < +\infty$, then all the claimed properties hold true.

As concerns \eqref{eq:technical}, since the marginals have finite second moment, the choice $B(x) = \abs{x}^2$ satisfies the requirements. As for the feasibility of the marginal constraint, note that
\begin{align*}
H(\mu_0 \otimes \mu_1 \mid \mathsf{R}_\varepsilon) &= H(\mu_0 \otimes \mu_1 \mid \mathcal{L}^n \otimes \mathcal{L}^n) + \int \log \left( \sqrt{2\pi\varepsilon} e^{\frac{\abs{x-y}^2}{2\varepsilon}} \right) \de (\mu_0\otimes \mu_1 ) \\
&=H(\mu_0 \mid \mathcal{L}^n) + H(\mu_1 \mid \mathcal{L}^n) + \int \log \left( \sqrt{2\pi\varepsilon} e^{\frac{\abs{x-y}^2}{2\varepsilon}} \right) \de (\mu_0\otimes \mu_1 ) < +\infty.
\end{align*}
Thus \cite[Proposition $2.1$]{gigli2021second} grants existence and uniqueness of the solution, as well as the existence of the decomposition.
\end{proof}

We are now ready to state our main result. A complete proof is postponed after Proposition \ref{fluidnotvar} because we need to deal at first with some technical Lemmas.

\begin{theorem}\emph{\textbf{(Benamou--Brenier formula for the Schrödinger problem)}}\label{main_theorem}
Under Assumptions \ref{assumptions}, for every $\varepsilon >0$ it holds
\begin{align*}
\varepsilon &\underset{\gamma \in \Gamma(\mu_0,\mu_1)}{\min} H(\gamma \mid \mathsf{R}_\varepsilon ) = \varepsilon H(\mu_0 \mid \mathcal{L}^n) \\
&+\min \left\lbrace \int_0^1 \int \frac{\abs{v_t}^2}{2}\eta_t \de\mathcal{L}^n \de t: (\eta_t\mathcal{L}^n,v_t) \textup{  Fokker--Planck pair, } \eta_0 = \varrho_0, \eta_1 = \varrho_1\right\rbrace.
\end{align*}
Moreover, on the right-hand side, the (unique) minimizer is $(\mu^\varepsilon_t,\nabla\psi^\varepsilon_t)$. 
\end{theorem}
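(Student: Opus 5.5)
The proof splits into two inequalities, following the structure of the Gaussian-mixture example above.

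\textbf{Step 1: the entropic interpolation realises the value.} Here $(\mu^\varepsilon_t,\nabla\psi^\varepsilon_t)$ is a competitor with action exactly $\varepsilon H(\gamma^\varepsilon\mid\mathsf R_\varepsilon)-\varepsilon H(\mu_0\mid\mathcal L^n)$. I would first collect growth estimates.
\begin{itemize}
\item From the semiconvexity/semiconcavity bounds of \cite{conforti2024weak,chiarini2024semiconcavity}, valid under Assumptions \ref{assumptions}, we get $-c\,\mathrm{Id}\le D^2\psi^\varepsilon_t\le c\,\mathrm{Id}$ uniformly in $t\in[0,1]$. Hence $|\psi^\varepsilon_t|\le c(1+|x|^2)$ and $|\nabla\psi^\varepsilon_t|\le c(1+|x|)$.
\item The same bounds apply to $\varphi^\varepsilon_t$. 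Since $\varrho^\varepsilon_t=e^{(\varphi^\varepsilon_t+\psi^\varepsilon_t)/\varepsilon}$, the Gaussian tails of $\varrho_0,\varrho_1$ propagate to a Gaussian moment $\sup_t\int e^{a|x|^2}\varrho^\varepsilon_t<\infty$ for some $a>0$.
\item We also get polynomial-times-$\varrho^\varepsilon_t$ bounds on $\nabla\varrho^\varepsilon_t$, $\Delta\varrho^\varepsilon_t$ and $\partial_t\varrho^\varepsilon_t$. These follow from $\nabla\varrho=\varrho\,\varepsilon^{-1}\nabla(\varphi+\psi)$ and the HJB/FP system.
\end{itemize}
With these, $t\mapsto\psi^\varepsilon_t\varrho^\varepsilon_t$ has an $L^1$-dominated time derivative on $[\delta,1-\delta]$. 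We differentiate under the integral sign and integrate by parts, justified with the cut-offs $\zeta_R$ and $R\to\infty$: boundary terms are polynomial times $\varrho$, hence vanish. As in the example, this gives
\[
\frac{\de}{\de t}\int\psi^\varepsilon_t\varrho^\varepsilon_t=\frac12\int|\nabla\psi^\varepsilon_t|^2\varrho^\varepsilon_t .
\]
We then let $\delta\to0$, using continuity of $\int\psi^\varepsilon_t\de\mu^\varepsilon_t$ at the endpoints; this follows by dominated convergence from the quadratic growth and the uniform Gaussian moments. Using $\psi^\varepsilon_0=\varepsilon\log\varrho_0-\varepsilon\log f^\varepsilon$, $\psi^\varepsilon_1=\varepsilon\log g^\varepsilon$ and the marginal constraints, we identify
\[
\int\psi^\varepsilon_1\de\mu_1-\int\psi^\varepsilon_0\de\mu_0=\varepsilon H(\gamma^\varepsilon\mid\mathsf R_\varepsilon)-\varepsilon H(\mu_0\mid\mathcal L^n).
\]
The Fokker--Planck pair conditions (i)--(ii) hold by linear growth of $\nabla\psi^\varepsilon_t$ and finite second moments.

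\textbf{Step 2: lower bound for an arbitrary competitor.} Take any Fokker--Planck pair $(\eta_t\mathcal L^n,v_t)$ with finite action and the right endpoints. I would test \eqref{eq:fokker-planck} with $\psi^\varepsilon_t$. This is not $C^2_b$, so first use $\psi^\varepsilon_t\zeta_R$ on $[\delta,1-\delta]$, where $\psi^\varepsilon$ is smooth. By the HJB equation,
\[
\int\psi^\varepsilon_{1-\delta}\eta_{1-\delta}-\int\psi^\varepsilon_\delta\eta_\delta=\int_\delta^{1-\delta}\!\!\int\Bigl(-\tfrac12|\nabla\psi^\varepsilon_t|^2+v_t\cdot\nabla\psi^\varepsilon_t\Bigr)\eta_t\le\int_\delta^{1-\delta}\!\!\int\tfrac12|v_t|^2\eta_t ,
\]
up to cut-off error terms. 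Those terms are controlled by $\int|x|^2\eta_t$ and $\int(1+|x|)|v_t|\eta_t$. I expect the \textbf{main obstacle} to be that a general competitor carries no a priori Gaussian moment, only finite action. I would first prove a uniform second-moment bound $\sup_t\int|x|^2\eta_t<\infty$ by testing with truncations of $|x|^2$ and using Gronwall together with $\int|v_t|^2\eta_t\in L^1$. Then $\int(1+|x|)|v_t|\eta_t$ is integrable by Cauchy--Schwarz, and the $R\to\infty$ errors vanish because $\nabla\zeta_R=O(1/R)$ is supported where $|x|\sim R$.

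Passing $\delta\to0$ on the left requires weak continuity of $\eta_t$ plus uniform integrability of $\psi^\varepsilon_t$, which grows quadratically, against $\eta_t$. This is delicate with only second moments. A possible route is to use the semiconcavity $\psi^\varepsilon_t\le c(1+|x|^2)$ for one endpoint and a lower bound together with Fatou for the other, arranged so each inequality goes in the right direction. An alternative is to pass to the limit in the moment estimate with $|x|^2\log|x|$ truncations.

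\textbf{Uniqueness.} Equality in Step 2 forces $\int_0^1\int|v_t-\nabla\psi^\varepsilon_t|^2\eta_t=0$, so $v_t=\nabla\psi^\varepsilon_t$ $\eta_t$-a.e. Then $\eta$ solves a linear Fokker--Planck equation with a drift that is Lipschitz (by the Hessian bounds) and with a fixed initial datum. Uniqueness for such equations (e.g.\ via the superposition principle, or Holmgren-type duality using the backward Kolmogorov equation with Lipschitz drift) gives $\eta_t=\varrho^\varepsilon_t$.
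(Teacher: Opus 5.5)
Your Step 1 is the paper's Proposition~\ref{fluidnotvar}. Step 2 and your uniqueness argument, however, take a different route. The paper never inserts $\psi^\varepsilon$ into the equation of a competitor. By Remark~\ref{weakeningtestfunctions} it tests only with $C^1([0,1])\cap C^2_b$ subsolutions of the HJB inequality, which needs nothing beyond $|v_t|\in L^1(\eta_t\mathcal L^n)$. It then reaches $\psi^\varepsilon$ through Lemma~\ref{density}, approximating by the bounded solutions $f_R$ generated by the truncated terminal datum $e^{\psi^\varepsilon_1/\varepsilon}\zeta_R+1/R$. All limits are taken against the fixed marginals $\mu_0,\mu_1$, so no moment information on the competitor is ever needed, and the dual formulation comes as a by-product. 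Uniqueness is proved separately, by a convexity argument on $\Theta(\eta,q)=|q|^2/\eta$.

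Your direct test with $\psi^\varepsilon_t\zeta_R$ costs an a priori bound $\sup_t\int|x|^2\eta_t\,\de x<\infty$, which your Gronwall argument does provide. In return it yields the exact identity
\[
\int_0^1\!\!\int\frac{|v_t|^2}{2}\eta_t\,\de x\,\de t=\int\psi^\varepsilon_1\,\de\mu_1-\int\psi^\varepsilon_0\,\de\mu_0+\int_0^1\!\!\int\frac{|v_t-\nabla\psi^\varepsilon_t|^2}{2}\eta_t\,\de x\,\de t .
\]
Both the lower bound and uniqueness follow at once from it, the latter via well-posedness of the linear Fokker--Planck equation with the globally Lipschitz drift $\nabla\psi^\varepsilon_t$. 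This is a more direct path to uniqueness than the paper's.

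As written, though, Step 2 is not closed at the endpoints, and neither of your suggested remedies works. Since $\psi^\varepsilon_t$ is only controlled by $\pm c(1+|x|^2)$, Fatou under narrow convergence gives only
\[
\liminf_{t\to1}\int\psi^\varepsilon_t\eta_t\,\de x\ \ge\ \int\psi^\varepsilon_1\,\de\mu_1+c\Bigl(\int|x|^2\,\de\mu_1-\limsup_{t\to1}\int|x|^2\eta_t\,\de x\Bigr).
\]
The bracket is $\le0$ by lower semicontinuity, so you need convergence of second moments anyway; the same happens at $t=0$. The $|x|^2\log|x|$ route fails because finite action does not propagate such moments: in the Gronwall step, the cross term $\log(1+|x|^2)\,x\cdot v_t$ cannot be absorbed by $|v_t|^2+|x|^2\log(1+|x|^2)$.

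The missing observation is simpler. Once $\sup_t\int|x|^2\eta_t\,\de x<\infty$, the same truncation argument gives
\[
\int|x|^2\eta_t\,\de x-\int|x|^2\eta_s\,\de x=\int_s^t\Bigl(\varepsilon n+2\int x\cdot v_r\,\eta_r\,\de x\Bigr)\de r ,
\]
with integrand in $L^1(0,1)$ by Cauchy--Schwarz. Hence $t\mapsto\int|x|^2\eta_t\,\de x$ is continuous on $[0,1]$. Together with narrow continuity, this gives $W_2$-convergence to $\mu_0,\mu_1$ and uniform integrability of $|x|^2$. Since $\psi^\varepsilon_t$ converges locally uniformly with quadratic bounds uniform in $t$ (Lemma~\ref{psi_t}), $\int\psi^\varepsilon_t\eta_t\,\de x$ then converges to the endpoint integrals.

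Alternatively, drop the time truncation altogether. By Lemma~\ref{psi_t}(a), $\psi^\varepsilon\in C^1([0,1])\cap C^2(\R^n)$, so you can test on all of $[0,1]$. The endpoint terms are then integrals against $\mu_0,\mu_1$ and are handled by dominated convergence.

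A smaller point: in Step 1, two-sided Hessian bounds on $\varphi^\varepsilon_t$ and $\psi^\varepsilon_t$ bound $D^2\log\varrho^\varepsilon_t$ from both sides but give no decay; that would need a negative upper bound. So your Gaussian moment does not follow as claimed. Use instead the Gronwall argument on $e^{W(t)|x|^2}$ of Proposition~\ref{varrho_t}(b). Another option is to write $\mu^\varepsilon_t$ as the law of $(1-t)X+tY+\sqrt{\varepsilon t(1-t)}\,Z$, with $(X,Y)\sim\gamma^\varepsilon$ and $Z$ an independent standard Gaussian. Given your pointwise bounds on $\nabla\varrho^\varepsilon_t$ and $\Delta\varrho^\varepsilon_t$, uniform fourth moments already suffice there.
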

\begin{proof}\textit{(Sketch)} The proof of the equality can be summarized in two main parts: the first is to prove 
\[  
\varepsilon H(\gamma^\varepsilon \mid \mathsf{R}_\varepsilon ) = \varepsilon H(\mu_0 \mid \mathcal{L}^n) + \int_0^1\int \frac{\abs{\nabla\psi_t^\varepsilon}^2}{2}\de\mu^\varepsilon_t \de t
\]
that permits us to obtain the $\ge$ inequality. The second part is to argue by duality to obtain the $\le$ inequality. 

We postpone the proof of the uniqueness of the minimizer.
\end{proof}

As a Corollary, we can also obtain a dual formulation for the Schrödinger problem.
\begin{corollary}
Under Assumptions \ref{assumptions}, for every $\varepsilon >0$ it holds
\begin{align*}
\varepsilon &\underset{\gamma \in \Gamma(\mu_0,\mu_1)}{\min} H(\gamma \mid \mathsf{R}_\varepsilon ) = \varepsilon H(\mu_0 \mid \mathcal{L}^n) \\
&+\max \left\lbrace \int f_1 \de\mu_1 - \int f_0 \de \mu_0: f \in \mathfrak{H}, \frac{\partial}{\partial t} f + \frac{\varepsilon}{2}\Delta f + \frac{1}{2}\abs{\nabla f}^2 \le 0\right\rbrace \,,
\end{align*}
where 
\[\mathfrak{H} = \left\lbrace f \in C^1([0,1])\cap C^2(\R^n):
    m^\varepsilon_t \abs{\xi}^2 \le \xi \cdot D^2f(t,x) \xi \le M^\varepsilon_t\abs{\xi}^2 \right\rbrace,\]
where $m^\varepsilon_t,M^\varepsilon_t \in \R$ and depend on the marginals as in Lemma \ref{psi_t} (b).
Moreover, on the right-hand side, a maximizer is given by $\psi^\varepsilon_t$. 
\end{corollary}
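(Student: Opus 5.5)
The plan is to derive the corollary from Theorem \ref{main_theorem} by a weak-duality inequality for every admissible $f$, with equality attained at $f=\psi^\varepsilon$.

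\emph{Step 1 ($\le$ direction for every competitor).} Fix $f\in\mathfrak{H}$ with $\partial_t f+\frac{\varepsilon}{2}\Delta f+\frac12|\nabla f|^2\le 0$, and take the optimal Fokker--Planck pair $(\mu^\varepsilon_t,\nabla\psi^\varepsilon_t)$ from Theorem \ref{main_theorem}. Formally, testing the Fokker--Planck equation with $f$ gives
\begin{align*}
\int f_1\,\de\mu_1-\int f_0\,\de\mu_0 &= \int_0^1\!\!\int\Bigl(\partial_t f_t+\tfrac{\varepsilon}{2}\Delta f_t+\nabla f_t\cdot\nabla\psi^\varepsilon_t\Bigr)\de\mu^\varepsilon_t\,\de t\\
&\le \int_0^1\!\!\int\Bigl(-\tfrac12|\nabla f_t|^2+\nabla f_t\cdot\nabla\psi^\varepsilon_t\Bigr)\de\mu^\varepsilon_t\,\de t\le \int_0^1\!\!\int\tfrac12|\nabla\psi^\varepsilon_t|^2\,\de\mu^\varepsilon_t\,\de t ,
\end{align*}
where the last bound is Young's inequality. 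By Theorem \ref{main_theorem}, the right-hand side equals $\varepsilon H(\gamma^\varepsilon\mid\mathsf{R}_\varepsilon)-\varepsilon H(\mu_0\mid\mathcal{L}^n)$. Taking the supremum over $f$ gives the inequality ``$\ge$'' in the corollary.

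\emph{Step 2 (attainment).} By Lemma \ref{psi_t} (b), $\psi^\varepsilon$ lies in $\mathfrak{H}$; indeed the class $\mathfrak{H}$ is tailored to its Hessian bounds. It also solves the HJB equation with equality, so both inequalities above become equalities with $f=\psi^\varepsilon$. This is exactly the identity from the first part of the proof of Theorem \ref{main_theorem}:
\[
\int\psi^\varepsilon_1\,\de\mu_1-\int\psi^\varepsilon_0\,\de\mu_0=\int_0^1\!\!\int\tfrac12|\nabla\psi^\varepsilon_t|^2\,\de\mu^\varepsilon_t\,\de t .
\]
Hence the supremum is a maximum, attained by $\psi^\varepsilon$. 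One caveat is the endpoint $t=1$, where $\psi^\varepsilon_1$ is defined through $\log g^\varepsilon$. There I would rely on the continuity in $t$ established in Lemma \ref{psi_t} and the preceding lemmas.

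\emph{Main obstacle.} The real work is justifying the first equality of Step 1, since $f$ is unbounded and grows quadratically, so Remark \ref{weakeningtestfunctions} (which needs $C^2_b$) does not apply directly. I would use the cutoff $f\zeta_R$ as in that remark. The Hessian bounds in $\mathfrak{H}$ give $|f_t(x)|\le C(1+|x|^2)$ and $|\nabla f_t(x)|\le C(1+|x|)$, and by Lemma \ref{psi_t} the velocity $\nabla\psi^\varepsilon_t$ grows at most linearly. The error terms $f\Delta\zeta_R$, $\nabla\zeta_R\cdot\nabla f$ and $f\nabla\zeta_R\cdot\nabla\psi^\varepsilon$ are then bounded by polynomials times $\mathbf 1_{\{R<|x|<2R\}}$. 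The Gaussian moment bounds on $\varrho^\varepsilon_t$, uniform in $t$ and proved in the technical lemmas, make them vanish as $R\to\infty$ by dominated convergence. The same moment bounds make all the integrals on the left finite. One further check: the time-regularity of $f$ needed in the test class, and the handling of $\partial_t f\,\zeta_R$, where only an upper bound on $\partial_t f$ is available. For the latter I would apply Fatou's lemma or monotone convergence to the one-signed part, using that $\partial_t f\le -\frac{\varepsilon}{2}\Delta f-\frac12|\nabla f|^2$ is bounded above by a quadratic.
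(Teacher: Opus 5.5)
Your proof is correct, but it takes a different route from the paper's. The paper proves the corollary by combining three results:
\begin{enumerate}[$(i)$]
\item the weak-duality inequality from the proof of Theorem \ref{main_theorem}, which holds for test functions in $C^1([0,1])\cap C^2_b(\R^n)$ against \emph{every} Fokker--Planck pair (Remark \ref{weakeningtestfunctions} suffices there because these test functions are bounded);
\item Lemma \ref{density}, which shows that enlarging the test class from $C^2_b$ to $\mathfrak{H}$ does not increase the supremum, by approximating an unbounded subsolution with bounded HJB solutions $f_R$ built from truncated terminal data;
\item Proposition \ref{fluidnotvar}, for attainment at $\psi^\varepsilon$.
\end{enumerate}

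You skip Lemma \ref{density} entirely. You test only the optimal pair $(\mu^\varepsilon_t,\nabla\psi^\varepsilon_t)$ and admit quadratically growing test functions directly. The cutoff terms are then killed by the uniform moment bounds of Proposition \ref{varrho_t} and the linear growth of $\nabla\psi^\varepsilon_t$ from Lemma \ref{psi_t}. This gives a shorter, more self-contained argument. It uses only the value identity of Proposition \ref{fluidnotvar}, together with Proposition \ref{Setting} to identify $H(\gamma^\varepsilon\mid\mathsf{R}_\varepsilon)$ with the minimum. So it does not need the duality part of the proof of Theorem \ref{main_theorem}, and it avoids the approximation step, which the paper only sketches.

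What the paper's route gives in addition is that the supremum over $\mathfrak{H}$ is bounded by the kinetic energy of \emph{every} admissible pair, with no moment information on that pair. Your cutoff argument controls $f\,\nabla\zeta_R\cdot v_t$ only because the moments of $\varrho^\varepsilon_t$ are known; for a general pair you would first have to derive such bounds. The corollary does not need this extra generality.

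Two minor remarks. First, the equality in Step 1 is not actually needed. Pointwise, $\zeta_R\bigl(\partial_t f+\frac{\varepsilon}{2}\Delta f+\nabla f\cdot\nabla\psi^\varepsilon\bigr)\le\frac12\abs{\nabla\psi^\varepsilon}^2$, whether the bracket is nonnegative or negative. So the upper bound holds for each $R$ before letting $R\to\infty$, and having only one-sided control on $\partial_t f$ causes no difficulty. Second, your uniform-in-$t$ growth bounds for $f$ use $\sup_{t\in[0,1]}\bigl(\abs{f_t(0)}+\abs{\nabla f_t(0)}\bigr)<\infty$. This follows from the joint continuity implicit in $f\in C^1([0,1])\cap C^2(\R^n)$, but you should state it.
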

\begin{proof}
It is a direct consequence of Lemma \ref{density}, Proposition \ref{fluidnotvar}, and the previous Theorem.
\end{proof}

\section{Proofs}\label{Main}
\subsection{Technical results}
In the following, we collect almost all the technical parts of the document, some of which we believe are interesting in their own right.

The first technical result is on the growth of $\psi^\varepsilon_1 = \varepsilon \log g^\varepsilon$. We are able to prove, thanks to the results of \cite{chiarini2024semiconcavity},  that it inherits Hessian bounds from the ones of the marginals.
\begin{lemma}\label{psi_1}
Let $\mu_0$ and $\mu_1$ satisfy Assumptions \ref{assumptions} and $\psi_1^\varepsilon=\varepsilon\log g^\varepsilon$, with $g^\varepsilon$ a solution to \eqref{SchoSystem}. Then for every $\varepsilon >0$, the following facts hold true:
\begin{enumerate}[$(a)$]
    \item $\psi^\varepsilon_1 \in C^2(\R^n)$,
    \item for every $x,\xi \in \R^n$,
    \[ m^\varepsilon\abs{\xi}^2 \le \xi \cdot D^2\psi^\varepsilon_1(x) \xi \le M^\varepsilon\abs{\xi}^2,\]
    where 
    \begin{align*}
        m^\varepsilon &=  -1 -\frac{\varepsilon}{2C} + \frac{1}{C}\sqrt{\frac{\varepsilon^2}{4} + 1}, & M^\varepsilon &= -1 -\frac{\varepsilon}{2}C + C\sqrt{\frac{\varepsilon^2}{4} + 1},
    \end{align*}
    and $C$ is as in Assumptions \ref{assumptions},
    \item for every $x \in \R^n$,
    \[ \abs{\psi^\varepsilon_1(x)} \le c^\varepsilon + b^\varepsilon \abs{x} + \frac{a^\varepsilon}{2} \abs{x}^2,\]
    where $c^\varepsilon = \abs{\psi^\varepsilon_1(0)}$, $b^\varepsilon = \abs{\nabla\psi^\varepsilon_1(0)}$, and $a^\varepsilon = \max \left\lbrace \abs{m^\varepsilon}, \abs{M^\varepsilon} \right\rbrace$.
\end{enumerate}
\end{lemma}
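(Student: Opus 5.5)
The strategy is to express $\psi^\varepsilon_1$ through the Schrödinger system and then import the semiconvexity/semiconcavity estimates of \cite{chiarini2024semiconcavity}.

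\textbf{Regularity, part $(a)$.} By the second equation in \eqref{SchoSystem},
\[
\psi^\varepsilon_1(y) = \varepsilon\log\varrho_1(y) - \varepsilon\log\int f^\varepsilon(x)\,\mathsf{r}_\varepsilon(x,y)\,\de x = -\varepsilon V_1(y) - \varepsilon \log f^\varepsilon_1(y),
\]
with $f^\varepsilon_1$ as in \eqref{eq:sol_heat_forward}. This identity holds a.e., and we use it to define $\psi^\varepsilon_1$ on all of $\R^n$. The function $f^\varepsilon_1$ is a heat-semigroup evolution of a nonnegative, nontrivial function, so it is smooth and strictly positive. It is finite because $f^\varepsilon$ has at most Gaussian growth; this follows from the first equation of \eqref{SchoSystem} together with the standard integrability $\int f^\varepsilon \mathsf{r}_\varepsilon(\cdot,y)\,\de\mathcal{L}^n<\infty$. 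Hence $\log f^\varepsilon_1\in C^\infty$. Since $V_1\in C^2$, we obtain $\psi^\varepsilon_1\in C^2(\R^n)$.

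\textbf{Hessian bounds, part $(b)$.} Under two-sided bounds $C^{-1}\le D^2V_i\le C$, \cite{chiarini2024semiconcavity} (see also \cite{conforti2024weak}) shows that the entropic potentials are semiconvex and semiconcave with explicit constants. These constants are obtained by propagating the Hessian bounds through the fixed-point (Sinkhorn) map. Passing from $V$ to the log-Laplace transform against the Gaussian kernel turns a bound $\alpha\le D^2\le\beta$ into a bound of the form $\frac{\alpha}{1+\varepsilon\alpha}$-type expressions. Solving the resulting quadratic fixed-point relation
\[
\kappa = \frac{1}{C} - \frac{\ldots}{\varepsilon+\ldots}
\]
produces the constants $-1-\frac{\varepsilon}{2C}+\frac1C\sqrt{\frac{\varepsilon^2}{4}+1}$ and the analogous one with $C$. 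These give $m^\varepsilon\le D^2\psi^\varepsilon_1\le M^\varepsilon$ after the normalization relating $\psi^\varepsilon_1$ to the potential in \cite{chiarini2024semiconcavity}. Concretely, that normalization is that $\psi^\varepsilon_1+\frac{|y|^2}{2}$ is, up to a factor, their potential; this explains the shift by $-1$ in both constants.

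The plan is to quote their theorem and carefully translate their conventions into ours. The conventions to check are the cost $|x-y|^2/2$ versus $\varepsilon\log$ of the heat kernel, the quadratic shift, and the time normalization $\varepsilon$. This bookkeeping is the main obstacle. If the translation fails, a fallback is to prove the bounds directly: compute the Hessian of $-\varepsilon\log f^\varepsilon_1$ as $\frac1\varepsilon$ times a covariance minus $\mathrm{Id}$ under the conditional law, then bound that covariance by Brascamp--Lieb and by a reverse (Cramér--Rao type) inequality, and close the loop with a fixed-point argument on the pair of potentials.

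\textbf{Growth bound, part $(c)$.} This follows from Taylor's formula with integral remainder:
\[
\psi^\varepsilon_1(x)=\psi^\varepsilon_1(0)+\nabla\psi^\varepsilon_1(0)\cdot x+\int_0^1(1-s)\,x\cdot D^2\psi^\varepsilon_1(sx)x\,\de s .
\]
By $(b)$, $|x\cdot D^2\psi^\varepsilon_1 x|\le a^\varepsilon|x|^2$, and the remainder weight integrates to $\int_0^1(1-s)\,\de s=\tfrac12$. The triangle and Cauchy--Schwarz inequalities then give $|\psi^\varepsilon_1(x)|\le c^\varepsilon+b^\varepsilon|x|+\frac{a^\varepsilon}{2}|x|^2$.
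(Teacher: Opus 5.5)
Your plan follows the paper's proof step by step: (a) comes from $\psi^\varepsilon_1=\varepsilon\log\varrho_1-\varepsilon\log f^\varepsilon_1$, (b) from quoting \cite[Corollary~2.3]{chiarini2024semiconcavity}, and (c) from Taylor's formula. Part (c) is fine as an argument, and it even keeps the factor $\tfrac12$ that the paper's last display drops. The gap is in (b), at the translation you flag as the main obstacle, and it is more than bookkeeping.

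\textbf{Your normalization is wrong.} The reference measure is $\mathsf{r}_\varepsilon\,\mathcal{L}^n\otimes\mathcal{L}^n$ and $\varepsilon\log\mathsf{r}_\varepsilon(x,y)=-\tfrac{|x-y|^2}{2}+\mathrm{const}$. Hence, up to a constant factor, $\gamma^\varepsilon=\exp\bigl((\varphi(x)+\psi(y)-\tfrac{|x-y|^2}{2})/\varepsilon\bigr)\,\mu_0\otimes\mu_1$ with $\varphi=\varepsilon\log f^\varepsilon+\varepsilon V_0$ and $\psi=\psi^\varepsilon_1+\varepsilon V_1$. Your own formula in (a) already contains the term $-\varepsilon V_1$. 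Set $\Phi:=\tfrac{|y|^2}{2}-\psi$. It is convex, with $D^2\Phi=\tfrac1\varepsilon\mathrm{Cov}(X\mid y)$ under the law $\propto f^\varepsilon(x)\mathsf{r}_\varepsilon(x,y)\,\de x$. Then
\[
D^2\psi^\varepsilon_1=\mathrm{Id}-D^2\Phi-\varepsilon D^2V_1 ,
\]
which is a sign flip plus an $\varepsilon$-dependent shift, not a shift by $-1$. Your fallback also has the sign reversed: $-\varepsilon D^2\log f^\varepsilon_1=\mathrm{Id}-\tfrac1\varepsilon\mathrm{Cov}(X\mid y)$.

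\textbf{The stated constants are false, so no translation can reach them.} Take $n=1$ and $\mu_0=\mu_1=(2\pi)^{-1/2}e^{-x^2/2}\mathcal{L}^1$. Then $D^2V_i\equiv 1$, so $C=1$ is admissible and $m^\varepsilon=M^\varepsilon=\sqrt{\varepsilon^2/4+1}-\tfrac\varepsilon2-1$. By symmetry $f^\varepsilon,g^\varepsilon\propto e^{-ax^2/2}$ with $a+\tfrac{a}{1+\varepsilon a}=1$, whence
\[
D^2\psi^\varepsilon_1=-\varepsilon a=1-\tfrac\varepsilon2-\sqrt{\varepsilon^2/4+1}\neq m^\varepsilon\qquad\text{for every }\varepsilon>0 .
\]
For $\varepsilon=1$ this is $-0.618$ against $-0.382$. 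The failure does not depend on choosing $C=1$: the centred Gaussians with variances $C$ and $C^{-1}$ violate the lower bound for every $C\ge1$. The same example also breaks (c) with the stated $a^\varepsilon$, since $\abs{\psi^\varepsilon_1(x)}$ grows like $0.309\,x^2$ while the bound grows like $0.191\,x^2$.

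\textbf{What can be proved.} Put $z:=\sqrt{\varepsilon^2/4+1}-\tfrac\varepsilon2$. The stated constants are $m^\varepsilon=z/C-1$ and $M^\varepsilon=Cz-1$. Moreover, $z/C$ and $Cz$ are exactly the values of $D^2\Phi$ for Gaussian marginals with $(D^2V_0,D^2V_1)=(C,C^{-1})$ and $(C^{-1},C)$. So the stated constants are what one gets by combining bounds $z/C\le D^2\Phi\le Cz$ (presumably the content of the cited corollary) with the incorrect identity $D^2\psi^\varepsilon_1=D^2\Phi-\mathrm{Id}$. Combined with the correct identity, the same bounds give
\[
-M^\varepsilon-\varepsilon C\;\le\; D^2\psi^\varepsilon_1\;\le\; -m^\varepsilon-\tfrac{\varepsilon}{C},
\]
which is attained in the example above.

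The paper's proof of (b) is the same one-line citation and has the same defect, so you are not missing an idea it contains. A correct proof must nonetheless carry out the translation and state the corrected constants, with $a^\varepsilon$ in (c) redefined accordingly. Only the qualitative two-sided Hessian bound is used later in the paper, and it survives.

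\textbf{A smaller point in (a).} Deriving finiteness of $f^\varepsilon_1$ from ``$\int f^\varepsilon\mathsf{r}_\varepsilon(\cdot,y)\,\de\mathcal{L}^n<\infty$'' is circular, since that is the finiteness of $f^\varepsilon_1(y)$ itself. Argue instead as follows. First, $f^\varepsilon_1=\varrho_1/g^\varepsilon<\infty$ a.e. Second, $y\mapsto\log f^\varepsilon_1(y)+\tfrac{|y|^2}{2\varepsilon}$ is a log-Laplace transform, hence convex. Its finiteness domain is therefore a convex set of full measure, i.e.\ $\R^n$, and the function is smooth there.
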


\begin{proof}\hfill
\par\noindent 
$(a)$ Applying $\log$ to both sides of the second equation of the Schrödinger system and recalling \eqref{eq:sol_heat_forward}, we can write $\psi^\varepsilon_1 = \varepsilon\log g^\varepsilon = \varepsilon\log \varrho_1 - \varepsilon \log f^\varepsilon_1$, whence the desired regularity.

\par\noindent 
$(b)$ Since, by $(a)$, $\psi^\varepsilon_1 \in C^2(\R^n)$, arguing as in \cite[Corollary $2.3$]{chiarini2024semiconcavity}, we have, for every $x,\xi \in \R^n$,
\[m^\varepsilon \abs{\xi}^2 \le \xi \cdot D^2\psi^\varepsilon_1(x) \xi \le M^\varepsilon\abs{\xi}^2.\]
\par\noindent
$(c)$ Using Taylor's formula with integral remainder, for every $x \in \R^n$,
\[\psi^\varepsilon_1(x) = \psi^\varepsilon_1(0) + \nabla\psi^\varepsilon_1(0)\cdot x + \int_0^1 (1-t) x \cdot D^2 \psi^\varepsilon_1(tx) x \de t,\]
and the inequality from $(b)$
% But
% \begin{align*}
%     \left( -1 -\frac{\varepsilon}{2}C_3 + \sqrt{\frac{\varepsilon^2}{4}C_3^2 + \frac{C_3}{C_2}}\right) &\int_0^1 (1-t) \de t \abs{x}^2 \le \int_0^1 (1-t) x \cdot D^2 \psi^\varepsilon_1(tx) x \de t \le \\ 
%     &\le \left( -1 -\frac{\varepsilon}{2}C_4 + \sqrt{\frac{\varepsilon^2}{4}C_4^2 + \frac{C_4}{C_1}}\right) \int_0^1 (1-t) \de t \abs{x}^2,
% \end{align*}
% that is 
% \begin{align*}
%     \frac{1}{2}\left( -1 -\frac{\varepsilon}{2}C_3 + \sqrt{\frac{\varepsilon^2}{4}C_3^2 + \frac{C_3}{C_2}}\right) \abs{x}^2 &\le \int_0^1 (1-t) x \cdot D^2 \psi^\varepsilon_1(tx) x \de t \le \\ 
%     &\le  \frac{1}{2}\left( -1 -\frac{\varepsilon}{2}C_4 + \sqrt{\frac{\varepsilon^2}{4}C_4^2 + \frac{C_4}{C_1}}\right) \abs{x}^2,
% \end{align*}
we get
\begin{align*}
  \psi^\varepsilon_1(0) + \nabla\psi^\varepsilon_1(0)\cdot x + \frac{1}{2}m^\varepsilon \abs{x}^2 \le \psi^\varepsilon_1(x) \le \psi^\varepsilon_1(0) + \nabla\psi^\varepsilon_1(0)\cdot x + \frac{1}{2}M^\varepsilon \abs{x}^2,
\end{align*}
then, by Cauchy--Schwarz inequality and some elementary estimates with the absolute value and the maximum between real numbers,
\[  -c^\varepsilon - b^\varepsilon \abs{x} - a^\varepsilon \abs{x}^2 \le \psi^\varepsilon_1(x) \le c^\varepsilon + b^\varepsilon \abs{x} + a^\varepsilon \abs{x}^2\]
and the result follows.
\end{proof}

In this second Lemma, we apply the important result contained in \cite{conforti2024weak} to propagate the Hessian bounds on $\psi^\varepsilon_1$ to every time in $[0,1]$.

\begin{lemma}\label{psi_t}
Let $\mu_0$ and $\mu_1$ satisfy Assumptions \ref{assumptions} and $\psi_t^\varepsilon=\varepsilon\log(g_t^\varepsilon)$, with $g_t^\varepsilon$ a solution to the backward heat equation with datum $g^\varepsilon$. Then, for every $\varepsilon >0$, the following facts hold true:
\begin{enumerate}[$(a)$]
    \item $\psi^\varepsilon_t \in C^1([0,1])\cap C^2(\R^n)$,
    \item for every $t \in [0,1]$, for every $x,\xi \in \R^n$,
    \[ \frac{m^\varepsilon}{1+(1-t)m^\varepsilon}\abs{\xi}^2 \le \xi \cdot D^2\psi^\varepsilon_t(x) \xi \le \frac{M^\varepsilon}{1+(1-t)M^\varepsilon}\abs{\xi}^2,\]
    \item for every $t \in [0,1]$, for every $x \in \R^n$,
    \[ \abs{\nabla\psi^\varepsilon_t(x)} \le b^\varepsilon_t + a^\varepsilon_t\abs{x},\]
    where $b^\varepsilon_t = \abs{\nabla\psi^\varepsilon_t(0)}$ and 
    \[ a^\varepsilon_t =\max \left\lbrace \abs*{\frac{m^\varepsilon}{1+(1-t)m^\varepsilon}}, \abs*{ \frac{M^\varepsilon}{1+(1-t)M^\varepsilon}} \right\rbrace, \]
    \item for every $t \in [0,1]$, for every $x \in \R^n$,
    \[ \abs{\psi^\varepsilon_t(x)} \le c^\varepsilon_t + b^\varepsilon_t \abs{x} + \frac{a^\varepsilon_t}{2} \abs{x}^2,\]
    where $c^\varepsilon_t = \abs{\psi^\varepsilon_t(0)}$.
\end{enumerate}
\end{lemma}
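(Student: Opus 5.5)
The proof follows the scheme of Lemma \ref{psi_1}: first regularity, then propagation of the Hessian bounds backward in time via \cite{conforti2024weak}, and finally Taylor expansion for (c) and (d).

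For (a), we write $g^\varepsilon_t = \int g^\varepsilon(y)\mathsf{r}_{\varepsilon(1-t)}(\cdot,y)\de y$ for $t<1$. Lemma \ref{psi_1}(c) gives $g^\varepsilon = e^{\psi^\varepsilon_1/\varepsilon} \le \exp\bigl((c^\varepsilon+b^\varepsilon|y|+\tfrac{a^\varepsilon}{2}|y|^2)/\varepsilon\bigr)$. The Gaussian kernel $e^{-|x-y|^2/(2\varepsilon(1-t))}$ dominates this growth as long as $a^\varepsilon(1-t) < 1$. The bound $m^\varepsilon > -1$ should ensure finiteness of $g^\varepsilon_t$ for all $t$: indeed $g^\varepsilon_t>0$ is finite because it equals $\varrho_t/f_t$ in the Schrödinger structure. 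On any compact subset of $[0,1)\times\R^n$ we may then differentiate under the integral sign, so $g^\varepsilon_t$ is smooth and positive and $\psi^\varepsilon_t=\varepsilon\log g^\varepsilon_t$ is $C^\infty$ there. At $t=1$ we use Lemma \ref{psi_1}(a). Continuity of $\psi^\varepsilon_t$ and its derivatives up to $t=1$ follows from the Hamilton--Jacobi equation $\partial_t\psi+\tfrac\varepsilon2\Delta\psi+\tfrac12|\nabla\psi|^2=0$ together with heat-semigroup continuity for data with the Hessian control of Lemma \ref{psi_1}(b).

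For (b), we apply the propagation result of \cite{conforti2024weak} (equivalently, a direct computation). If $-\psi_1$ has its Hessian between $-M^\varepsilon$ and $-m^\varepsilon$, then $\psi^\varepsilon_t = \varepsilon\log P_{\varepsilon(1-t)} e^{\psi^\varepsilon_1/\varepsilon}$ is the value function of the HJB equation above. Comparison for the matrix Riccati inequality satisfied by $D^2\psi_t$, namely $\partial_t D^2\psi + \tfrac\varepsilon2\Delta D^2\psi + (D^2\psi)^2 + (\text{third-order terms contracted with }\nabla\psi)\ \le/\ge\ 0$, yields sub- and supersolutions given by the scalar ODE $\dot k = -k^2$, $k(1)=m^\varepsilon$ (resp.\ $M^\varepsilon$). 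This ODE solves to $k(t)=m/(1+(1-t)m)$, which is exactly the claimed bound. The bound can be checked directly on the Gaussian model $\psi_1=\tfrac m2|x|^2$: convolving $e^{m|y|^2/(2\varepsilon)}$ with the heat kernel gives $\tfrac{m}{2(1+(1-t)m)}|x|^2$ plus a constant.

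A rigorous alternative avoiding the PDE maximum principle uses the covariance representation $D^2\psi_t = \tfrac{1}{1-t}\mathrm{Id}\cdot(\ldots)$. Here $D^2\psi^\varepsilon_t(x) = \frac{1}{\varepsilon(1-t)^2}\mathrm{Cov}(\pi_{t,x}) - \frac{1}{1-t}\mathrm{Id}$, where $\pi_{t,x}\propto e^{\psi_1(y)/\varepsilon - |x-y|^2/(2\varepsilon(1-t))}$ is a log-concave measure whose potential has Hessian between $\frac{1}{\varepsilon}(\frac{1}{1-t}-M)$ and $\frac1\varepsilon(\frac1{1-t}-m)$. The Brascamp--Lieb inequality gives the covariance upper bound, which produces the upper estimate. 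The lower estimate requires a covariance lower bound, for which we use the Cramér--Rao type inequality valid for measures with Hessian bounded above. This step is the main obstacle, since it must be handled carefully, including positivity of $1+(1-t)m^\varepsilon$, i.e.\ $m^\varepsilon>-1$, which holds by the formula. If this fails, I would simply cite \cite[Theorem 1.2]{conforti2024weak}.

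For (c) and (d), the argument is the same as in Lemma \ref{psi_1}(c). We write $\nabla\psi_t(x)=\nabla\psi_t(0)+\int_0^1 D^2\psi_t(sx)x\,\de s$ and bound the operator norm of the symmetric Hessian by $a^\varepsilon_t$. For (d) we use second-order Taylor expansion with integral remainder.
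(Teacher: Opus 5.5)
Your skeleton matches the paper's proof: (a) comes from the heat-kernel convolution, (b) from \cite{conforti2024weak}, and (c), (d) from Taylor's formula. The gap is in your justification of (b). Each self-contained route you sketch, once computed correctly, gives a different inequality from the displayed one, and you reach the displayed one only through computational slips.

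The Riccati ODE $\dot k=-k^2$, $k(1)=m$, gives $\frac{\de}{\de t}(1/k)=1$, hence $k(t)=\frac{m}{1-(1-t)m}$. The function $\frac{m}{1+(1-t)m}$ instead solves $\dot k=+k^2$. Your Gaussian check has the same slip: for $(1-t)m<1$,
\[
\int e^{\frac{m\abs{y}^2}{2\varepsilon}}\,\mathsf{r}_{\varepsilon(1-t)}(x,y)\,\de y=\bigl(1-(1-t)m\bigr)^{-n/2}\exp\Bigl(\frac{m}{1-(1-t)m}\,\frac{\abs{x}^2}{2\varepsilon}\Bigr),
\]
so a convex quadratic datum becomes more convex backward in time, not less. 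Your covariance route agrees with this. Brascamp--Lieb gives $\mathrm{Cov}(\pi_{t,x})\preceq\varepsilon\bigl(\frac{1}{1-t}-M\bigr)^{-1}\mathrm{Id}$, hence $D^2\psi^\varepsilon_t\preceq\frac{M}{1-(1-t)M}\mathrm{Id}$. Cram\'er--Rao gives $D^2\psi^\varepsilon_t\succeq\frac{m}{1-(1-t)m}\mathrm{Id}$. So what your argument actually proves is
\[
\frac{m^\varepsilon}{1-(1-t)m^\varepsilon}\abs{\xi}^2\le\xi\cdot D^2\psi^\varepsilon_t(x)\xi\le\frac{M^\varepsilon}{1-(1-t)M^\varepsilon}\abs{\xi}^2\qquad\text{whenever }(1-t)M^\varepsilon<1.
\]

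This implies the displayed lower bound, since $\frac{m}{1+(1-t)m}\le\frac{m}{1-(1-t)m}$ when both denominators are positive; this is where your $m^\varepsilon>-1$ is used. It does not imply the displayed upper bound. For $M\neq0$ and $t<1$ one has $\frac{M}{1+(1-t)M}<\frac{M}{1-(1-t)M}$, and the datum $\psi^\varepsilon_1=\frac{M}{2}\abs{x}^2$ attains the right-hand side. Hence no argument that only uses $m^\varepsilon\le D^2\psi^\varepsilon_1\le M^\varepsilon$ can deliver the upper half of (b) as displayed.

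The form $\frac{\alpha}{1+s\alpha}$ is the classical rule for $-\log P_se^{-U}$ when $D^2U\succeq\alpha$ (with $P_s$ the heat semigroup). It therefore governs the convexity of $-\psi^\varepsilon_t$, not of $\psi^\varepsilon_t$. Your fallback of citing \cite{conforti2024weak}---which is all the paper's proof does---must be applied to $-\psi^\varepsilon_t$, whose Hessian at $t=1$ lies between $-M^\varepsilon$ and $-m^\varepsilon$. Done that way, it returns the corrected bound above; the displayed formula carries exactly this sign slip. Accordingly, $a^\varepsilon_t$ in (c)--(d) has to be built from the corrected bounds, after which your Taylor arguments go through verbatim.

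A smaller issue in (a): finiteness of $g^\varepsilon_t$ is governed by the upper bound $M^\varepsilon$, not by $m^\varepsilon>-1$. Also, writing $g^\varepsilon_t=\varrho^\varepsilon_t/f^\varepsilon_t$ is circular, since $\varrho^\varepsilon_t$ is defined as $f^\varepsilon_tg^\varepsilon_t$. A clean argument runs as follows. First, $g^\varepsilon_0=\varrho_0/f^\varepsilon<\infty$ a.e.\ by \eqref{SchoSystem}. Next, $e^{\abs{x}^2/(2\varepsilon)}g^\varepsilon_0(x)$ is a Laplace transform, so its finiteness domain is convex; having full measure, it is all of $\R^n$. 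Finally, for $t\in\,]0,1[$ one has $\mathsf{r}_{\varepsilon(1-t)}(x,\cdot)\le K_{t,x}\,\mathsf{r}_\varepsilon(0,\cdot)$, which gives finiteness of $g^\varepsilon_t$ everywhere.
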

\begin{proof}\hfill
\par\noindent 
$(a)$ It comes from the very definition of $\psi^\varepsilon_t$. Indeed, the latter is obtained from a convolution against the heat kernel.
\par\noindent
$(b)$ It is a direct application of \cite{conforti2024weak}.
\par\noindent
$(c)$ Note that, for all $x \in \R^n$, it holds
\begin{align*}
|\nabla\psi^\varepsilon_t (x)| = \left|\nabla \psi^\varepsilon_t (0) + \int_0^1 D^2 \psi^\varepsilon_t (sx)x \,\de s\right| \leq \abs{\nabla \psi^\varepsilon_t (0)} + \int_0^1 \abs*{D^2 \psi^\varepsilon_t (sx)} \abs{x} \,\de s \,,
\end{align*}
so that the conclusion follows from (b).
\par\noindent
$(c)$ It is the same argument as in Lemma \ref{psi_1} (c).
\end{proof}

In the following, we shall use the following result (for more details see for instance \cite[Lemma 1]{brezis1981nonlinear} and \cite{kato1972schrodinger}).
\begin{lemma}\emph{\textbf{(parabolic Kato's inequality)}}
Let $U$ be an open subset of $\R \times \R^n$ and $u \in L^1_{loc}(U)$ a weak solution of
\[\frac{\partial}{\partial t} u -\Delta u = f,\]
with $f \in L^1_{loc}(U)$. Then, weakly,
\[\frac{\partial}{\partial t} \abs{u} -\Delta \abs{u} \le \textup{sgn} (u) f ,\]
where
\[\textup{sgn} (u) = \begin{cases}
    1 & \textup{ if } u> 0, \\
    0  & \textup{ if } u= 0, \\
    -1 & \textup{ if } u< 0. \\
\end{cases}\]
\end{lemma}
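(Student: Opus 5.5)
The natural route is the classical one from Kato's original paper: approximate the modulus by smooth convex functions, apply the chain rule to $u$ mollified, and pass to the limit. Concretely, fix a nonnegative test function $\phi \in C^\infty_c(U)$ and mollify in space--time, $u_\delta = u * \rho_\delta$, $f_\delta = f*\rho_\delta$. On any open set compactly contained in $U$ that contains $\supt\phi$, these are smooth for small $\delta$ and satisfy $\partial_t u_\delta - \Delta u_\delta = f_\delta$ classically. Take the convex regularization $j_\eta(s) = \sqrt{s^2+\eta^2}$, for which $|j_\eta'| \le 1$, $j_\eta'' \ge 0$, $j_\eta(s)\to|s|$ and $j_\eta'(s)\to \textup{sgn}(s)$ pointwise as $\eta\to0$ (with the convention $\textup{sgn}(0)=0$, since $j_\eta'(0)=0$).

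The classical chain rule gives
\[\partial_t j_\eta(u_\delta) - \Delta j_\eta(u_\delta) = j_\eta'(u_\delta) f_\delta - j_\eta''(u_\delta)\abs{\nabla u_\delta}^2 \le j_\eta'(u_\delta) f_\delta.\]
Testing against $\phi\ge0$ and integrating by parts then yields
\[-\int j_\eta(u_\delta)(\partial_t\phi + \Delta\phi) \le \int j_\eta'(u_\delta) f_\delta\,\phi.\]

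Next let $\delta\to0$ at fixed $\eta$. Since $u_\delta\to u$ and $f_\delta\to f$ in $L^1_{loc}$ (and a.e. along a subsequence), and $j_\eta$ is $1$-Lipschitz, the left-hand side converges. On the right, $j_\eta'(u_\delta)f_\delta - j_\eta'(u)f$ splits as $j_\eta'(u_\delta)(f_\delta-f)$, controlled by $\|f_\delta-f\|_{L^1(\supt\phi)}$ because $|j_\eta'|\le1$, plus $(j_\eta'(u_\delta)-j_\eta'(u))f$, which tends to zero by dominated convergence since $j_\eta'$ is continuous and bounded by $1$. Finally let $\eta\to0$. The bounds $|j_\eta(s)|\le |s|+1$ and $|j_\eta'|\le1$ allow dominated convergence in both terms, giving
\[-\int \abs{u}(\partial_t\phi+\Delta\phi) \le \int \textup{sgn}(u) f\phi,\]
which is exactly the weak form of the claim.

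The main obstacle is the right-hand side in the limit $\delta\to0$. It is a product of a weakly controlled nonlinearity of $u_\delta$ with $f_\delta$, and only $L^1_{loc}$ convergence is available. The decisive point is to keep $\eta>0$ fixed during this limit, so that $j_\eta'$ is continuous, and only afterwards pass to $\textup{sgn}$. In that last step the limit at points where $u=0$ is $0$, which matches the stated convention. A secondary point is that $\abs{\nabla u}$ need not exist; this is avoided because the term $j_\eta''\abs{\nabla u_\delta}^2$ is simply discarded by sign before any limit is taken.
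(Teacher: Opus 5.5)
Your proof is correct. It is the classical Kato argument: you regularize with $j_\eta(s)=\sqrt{s^2+\eta^2}$, mollify in space--time, discard $j_\eta''(u_\delta)\abs{\nabla u_\delta}^2\ge 0$, and let $\delta\to0$ before $\eta\to0$. This is precisely the proof in the references the paper relies on (\cite{kato1972schrodinger}, \cite[Lemma 1]{brezis1981nonlinear}); the paper itself only cites the lemma and does not reproduce a proof.
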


In the following technical result, which we believe to be interesting also in itself, we establish that, under some bounds on the velocity fields, a solution to the Fokker--Planck equation and its spatial derivatives have finite Gaussian and $k$-th moments, respectively.

\begin{proposition}\label{varrho_t}
Let $\mu_0$ satisfying Assumptions \ref{assumptions} and $(\rho_t^\varepsilon\mathcal{L}^n, \nabla\psi^\varepsilon_t)$ be a Fokker--Planck pair with initial datum $\mu_0$ such that $\psi^\varepsilon_t \in C^1([0,1])\cap C^2(\R^n)$ and, for every $t \in [0,1]$, for every $x,\xi \in \R^n$,
    \[ m^\varepsilon_t\abs{\xi}^2 \le \xi \cdot D^2\psi^\varepsilon_t(x) \xi \le M^\varepsilon_t\abs{\xi}^2,\]
where $m^\varepsilon_t, M^\varepsilon_t$ are continuous functions on $[0,1]$. Then, for every $\varepsilon >0$, the following facts hold true:
\begin{enumerate}[$(a)$]
    \item $\varrho^\varepsilon_t \in C^1([0,1])\cap C^2(\R^n)$,
    \item there exists $ W : [0,1] \to \R $ such that $0 < W(1) \le W < C_2$ and, for every $R >1$,
\[ \underset{t \in [0,1]}{\sup}\; \int_{\R^n \setminus \B(0,R)} \varrho^\varepsilon_t \de\mathcal{L}^n \le I e^{-W(1)R^2},\]
where 
\[I = \underset{t \in [0,1]}{\sup}\; \int e^{W(t) \abs{x}^2} \varrho^\varepsilon_t(x) \de x < +\infty,\]
\item for every $k \in \N$,
\[\underset{t \in [0,1]}{\sup}\; \int \abs{x}^k \varrho^\varepsilon_t(x) \de x  < +\infty.\]
Moreover,
\begin{align*}
& \underset{t \in [0,1]}{\sup}\; \int \left(1+\abs{x}^2\right)\varrho^\varepsilon_t(x) \de x  < +\infty, \\
& \underset{t \in [0,1]}{\sup}\; \int \left(1+\abs{x}^2\right)\abs{\nabla\varrho^\varepsilon_t(x)} \de x  < +\infty \\
& \underset{t \in [0,1]}{\sup}\; \int \left(1+\abs{x}^2\right)\abs{\Delta \varrho^\varepsilon_t(x)} \de x  < +\infty.
\end{align*}
\end{enumerate}
\end{proposition}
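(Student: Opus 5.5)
Throughout, $\varrho^\varepsilon_t$ denotes the density of the given Fokker--Planck pair. The proof hinges on a Gaussian-moment estimate for it, obtained by testing the Fokker--Planck equation against a time-dependent Gaussian weight $\phi_t(x)=e^{W(t)\abs{x}^2}$, with $W$ decreasing and chosen through an ODE so that the weighted mass cannot blow up. Parts (b) and (c) then follow from this estimate together with parabolic regularity.

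\emph{Part (a).} Write the equation in non-divergence form:
\[
\partial_t\varrho-\tfrac{\varepsilon}{2}\Delta\varrho+\nabla\psi^\varepsilon_t\cdot\nabla\varrho+\Delta\psi^\varepsilon_t\,\varrho=0 .
\]
By the Hessian bounds, the drift $\nabla\psi^\varepsilon_t$ has at most linear growth and $\Delta\psi^\varepsilon_t$ is bounded. Since $\psi^\varepsilon_t\in C^1([0,1])\cap C^2(\R^n)$, local parabolic regularity (Schauder theory, after checking uniqueness of the distributional solution in the class of measures with finite second moment) gives $\varrho^\varepsilon_t\in C^1\cap C^2$ for $t>0$. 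At $t=0$ regularity comes from $\varrho_0=e^{-V_0}\in C^2$. In the case of interest, $\varrho^\varepsilon_t=f^\varepsilon_tg^\varepsilon_t$, and smoothness is immediate from the heat-kernel representation.

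\emph{Part (b).} This is the main obstacle. I would use $\zeta_R\phi_t$ as a test function in \eqref{eq:fokker-planck}, letting $R\to\infty$ at the end by monotone convergence once the boundary terms are controlled (truncating the Gaussian weight first if necessary). Since $\nabla\phi_t=2W x\phi_t$ and $\Delta\phi_t=(2nW+4W^2\abs{x}^2)\phi_t$, the integrand becomes
\[
\bigl(W'\abs{x}^2+\varepsilon nW+2\varepsilon W^2\abs{x}^2+2W x\cdot\nabla\psi^\varepsilon_t\bigr)\phi_t .
\]
From (c) of Lemma~\ref{psi_t}, in the form $\abs{\nabla\psi^\varepsilon_t(x)}\le b_t+a_t\abs{x}$, we get
\[
2Wx\cdot\nabla\psi^\varepsilon_t\le 2W\bigl(b_t\abs{x}+a_t\abs{x}^2\bigr).
\]
I then choose $W$ solving the Riccati-type inequality $W'+2\varepsilon W^2+2a_tW+1\le 0$, with $W(0)$ small, e.g.\ $W(0)<1/(2C)$. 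Since $V_0$ is $C^{-1}$-convex, $\varrho_0\le c\,e^{-\abs{x}^2/(2C)}$, so $\int\phi_0\varrho_0<\infty$. With $W(0)$ small enough, $W$ stays positive on $[0,1]$, which fixes $W(1)>0$ and the upper bound $C_2$. The remaining linear and constant terms are bounded by $K(1+\int\phi_t\varrho_t)$, and Gronwall yields $I<\infty$. The tail estimate is then Chebyshev:
\[
\int_{\abs{x}>R}\varrho_t\le e^{-W(t)R^2}I\le Ie^{-W(1)R^2},
\]
using that $W$ is decreasing.

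\emph{Part (c).} The moments $\int\abs{x}^k\varrho_t$ are bounded by $I$, since $\abs{x}^k\le c_ke^{W(1)\abs{x}^2}$. For the weighted bounds on $\nabla\varrho_t$ and $\Delta\varrho_t$, I would differentiate the equation to get parabolic equations for $u=\partial_i\varrho$ (and similarly for second derivatives) with right-hand sides involving $D^2\psi$, $\nabla\Delta\psi$ and $\varrho$. Applying the parabolic Kato inequality to $\abs{u}$ and testing against $(1+\abs{x}^2)$ (cut off by $\zeta_R$) should give a Gronwall bound.

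The delicate point is that this requires bounds on third derivatives of $\psi^\varepsilon_t$ and on $\nabla\varrho_0$, $\Delta\varrho_0$. The latter are available because $V_0\in C^2$ with bounded Hessian, so $\nabla V_0$ grows linearly and the Gaussian moments of $\varrho_0$ absorb this. The former I would try to obtain from the heat-kernel representation of $g^\varepsilon_t$ for $t<1$. If that fails, I would fall back on representing $\nabla\varrho_t$ via Duhamel's formula with the heat kernel, where derivatives fall on the kernel, and use the Gaussian moment bound from (b).
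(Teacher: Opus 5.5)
Your overall strategy is the paper's. In (b) you use a Gaussian weight $\phi_t=e^{W(t)\abs{x}^2}$ with $W$ tuned by a Riccati-type ODE so that the $\abs{x}^2$-terms cancel, then cut-off by $\zeta_R$ plus truncation, Gr\"onwall, and Chebyshev. In (c) you get the moments from the Gaussian bound, and the derivative bounds from the parabolic Kato inequality with weight $(1+\abs{x}^2)\zeta_R$. However, the one concrete choice you make in (b) fails. While $W\ge 0$, the inequality $W'+2\varepsilon W^2+2a_tW+1\le 0$ forces $W'\le -1$, because $a_t\ge 0$. So $W$ vanishes before time $W(0)$. The two-sided Hessian bound forces $C^{-1}\le C$, i.e.\ $C\ge 1$, so your $W(0)<1/(2C)$ is at most $1/2$. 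This restriction on $W(0)$ is genuinely needed for $\int e^{W(0)\abs{x}^2}\varrho_0<\infty$; it is sharp when $V_0=\abs{x}^2/(2C)$ up to a constant. Hence $W(1)<0$, and taking $W(0)$ smaller, as you suggest, only makes it worse. Near $t=1$ the weight then decays instead of growing, and both the tail bound $Ie^{-W(1)R^2}$ and your moment bound via $\abs{x}^k\le c_ke^{W(1)\abs{x}^2}$ are lost.

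The culprit is the inhomogeneous $+1$, which you introduced to absorb $2Wb_t\abs{x}$ through $-\abs{x}^2+2Wb_t\abs{x}\le W^2b_t^2$. The absorbing term must instead be proportional to $W$. Bound $2Wb_t\abs{x}\le Wb_t(1+\abs{x}^2)$, so that the coefficient of $\abs{x}^2\phi_t$ becomes $W'+2\varepsilon W^2+(2a_t+b_t)W$. Then let $W$ solve the homogeneous equation $W'+2\varepsilon W^2+AW=0$, $W(0)=k\in\,]0,1/(2C)[\,$, with $A\ge\max_t(2a_t+b_t)$. Its solution $W(t)=kAe^{-At}/(A+2k\varepsilon(1-e^{-At}))$ is decreasing and strictly positive on $[0,1]$. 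The leftover $(\varepsilon n+b_t)W\phi_t$ only enters the Gr\"onwall constant. This is exactly the paper's choice; replacing your $1$ by a sufficiently small $\delta>0$ would also work.

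Note too that the truncation you call optional is needed. The annulus terms from $\nabla\zeta_R$ and $\Delta\zeta_R$ are of order $\int_{R\le\abs{x}\le 2R}\phi_t\varrho^\varepsilon_t$, which $\int\zeta_R\phi_t\varrho^\varepsilon_t$ does not control. The paper uses $\min\{e^{W(t)\abs{x}^2},N\}$; the kink only adds a nonpositive singular part to the Laplacian. The annulus terms are then at most a constant times $N$ times the mass outside $\B(0,R)$, and the paper lets $R\to\infty$ before $N\to\infty$.

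Two smaller remarks. In (a), Schauder theory would require H\"older continuity of $\Delta\psi^\varepsilon_t$, which $\psi^\varepsilon_t\in C^2$ does not provide; your fallback $\varrho^\varepsilon_t=f^\varepsilon_tg^\varepsilon_t$ is what the paper actually invokes. In (c) you follow the paper, and the third-derivative issue you flag is real. Differentiating the equation produces $\div(\varrho^\varepsilon_t\nabla\partial_{x_i}\psi^\varepsilon_t)$. Its part $\varrho^\varepsilon_t\,\partial_{x_i}\Delta\psi^\varepsilon_t$ appears in the paper's computation as well, where it is absorbed into the constant $\tilde C$ without further justification. On that point your proposal is no less complete than the written proof.
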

\begin{proof}
We firstly notice that, arguing in the same way as in Lemma \ref{psi_t}, for every $t \in [0,1]$, for every $x \in \R^n$,
    \[ \abs{\nabla\psi^\varepsilon_t(x)} \le b^\varepsilon_t + a^\varepsilon_t\abs{x},\]
where $a^\varepsilon_t, b^\varepsilon_t$ are continuous functions on $[0,1]$. So, in particular, for every $t \in [0,1]$, for every $x \in \R^n$,
    \[ \abs{\psi^\varepsilon_t(x)} \le c^\varepsilon_t + b^\varepsilon_t \abs{x} + \frac{a^\varepsilon_t}{2} \abs{x}^2,\]
    where $c^\varepsilon_t$ is another continuous function on $[0,1]$.
\par\noindent 
$(a)$ It follows by definition of $\varrho^\varepsilon_t$.
\par\noindent
$(b)$ First of all, given $k \in \left]0, C_2\right[$, take 
\[W(t) = k \dfrac{ A e^{-At} }{A + 2k\varepsilon \left( 1 - e^{-At}\right)},\]
where 
\[A = \frac{1}{2} \underset{t \in [0,1]}{\max}\; b^\varepsilon_t + 2\underset{t \in [0,1]}{\max}\; a^\varepsilon_t,\]
and $a^\varepsilon_t$ and $b^\varepsilon_t$ are given by Lemmas \ref{psi_1} and \ref{psi_t}.
One can show that $W(t)$ solves the following Cauchy-Lipschitz problem
\[\begin{cases}
    W' + 2 \varepsilon W^2 +AW =0,\\
    W(0) = k
\end{cases}\]
and $ 0 < W(1) \le W < C_2$. Now,  given $N >0$, introducing, for ease of notation, 
\[\Phi_R^N(x)=\zeta_R(x) \min\left\lbrace e^{W(t)\abs{x}^2}, N\right\rbrace,\]
using the derivation under the integral sign, the equations solved by $\varrho^\varepsilon_t$, and integrating by parts we obtain
\begin{align*}
    & \frac{\de}{\de t} \int \Phi_R^N(x) \varrho^\varepsilon_t(x) \de x \le \int W'(t) \abs{x}^2 \zeta_R(x) e^{W(t)\abs{x}^2}\varrho^\varepsilon_t(x)\de x \\ 
    & \quad+ \int  \Phi_R^N(x)\left( \frac{\varepsilon}{2}\Delta \varrho^\varepsilon_t - \div (\varrho^\varepsilon_t \nabla \psi^\varepsilon_t)\right)\de x \\
    & \le \int W'(t) \abs{x}^2 \zeta_R(x) e^{W(t)\abs{x}^2}\varrho^\varepsilon_t(x)\de x + \frac{\varepsilon}{2}\int  \Delta \left( \Phi_R^N(x)\right) \varrho^\varepsilon_t \de x \\ 
    & \quad + \int \nabla\left(\Phi_R^N(x) \right) \cdot \nabla \psi^\varepsilon_t \varrho^\varepsilon_t \de x.
\end{align*}
Using Leibniz's rule for gradient and Laplacian, the property of exponential, Cauchy--Schwarz inequality, since
\[ \int_{\left\lbrace e^{W(t)\abs{x}^2} = N\right\rbrace} \Delta \min\left\lbrace e^{W(t)\abs{x}^2}, N\right\rbrace \varrho^\varepsilon_t \de x =-\int 2N \sqrt{\log N} \varrho^\varepsilon_td\mathcal{H}^{n-1} \le 0 \]
% \begin{align*}
%     \int_{\left\lbrace e^{W(t)\abs{x}^2} = N\right\rbrace} &\Delta \min\left\lbrace e^{W(t)\abs{x}^2}, N\right\rbrace \varrho^\varepsilon_t \de x = \\
%     &=\int \nabla  \min\left\lbrace e^{W(t)\abs{x}^2}, N\right\rbrace \cdot \nu(x) \mid_{\abs{x}^2=\log N} \varrho^\varepsilon_t(x) \de\mathcal{H}^{n-1}(x) = \\
% &=  -\int 2N \sqrt{\log N} \varrho^\varepsilon_td\mathcal{H}^{n-1} \le 0
% \end{align*}
we get
\begin{align*}
    &\frac{\varepsilon}{2}\int  \Delta  \left( \Phi_R^N(x)\right)  \varrho^\varepsilon_t \de x \\
    &\le \varepsilon \int (n+2W(t)\abs{x}^2) W(t) \Phi_R^N(x) \varrho^\varepsilon_t \de x + \frac{\varepsilon}{2}\frac{\Lambda}{R^2}n \int_{\B(0,2R)\setminus\B(0,R)} \min\left\lbrace e^{W(t)\abs{x}^2}, N\right\rbrace \varrho^\varepsilon_t \de x \\
    & \quad + 4\varepsilon \int_{\B(0,2R)\setminus\B(0,R)} \Lambda W(t) \min\left\lbrace e^{W(t)\abs{x}^2}, N\right\rbrace \varrho^\varepsilon_t \de x
\end{align*}
% \begin{align*}
%      &\frac{\varepsilon}{2}\int  \Delta  \left( \Phi_R^N(x)\right)  \varrho^\varepsilon_t \de x  = \frac{\varepsilon}{2} \int \zeta_R(x) \Delta \min\left\lbrace e^{W(t)\abs{x}^2}, N\right\rbrace \varrho^\varepsilon_t \de x + \\ 
%     &+ \varepsilon \int \nabla\zeta_R(x) \cdot \nabla \min\left\lbrace e^{W(t)\abs{x}^2}, N\right\rbrace \varrho^\varepsilon_t \de x  + \frac{\varepsilon}{2} \int \Delta \zeta_R(x)  \min\left\lbrace e^{W(t)\abs{x}^2}, N\right\rbrace \varrho^\varepsilon_t \de x \le \\
%     & \le \varepsilon \int n W(t) \Phi_R^N(x) \varrho^\varepsilon_t \de x + 2\varepsilon \int \zeta_R(x) W(t)^2 \abs{x}^2 \min\left\lbrace e^{W(t)\abs{x}^2}, N\right\rbrace \varrho^\varepsilon_t \de x + \\
%     &+ 4\varepsilon \int_{\B(0,2R)\setminus\B(0,R)} \Lambda W(t) \min\left\lbrace e^{W(t)\abs{x}^2}, N\right\rbrace \varrho^\varepsilon_t \de x +\\ &\frac{\varepsilon}{2}\frac{\Lambda}{R^2}n \int_{\B(0,2R)\setminus\B(0,R)} \min\left\lbrace e^{W(t)\abs{x}^2}, N\right\rbrace \varrho^\varepsilon_t \de x
% \end{align*}
and 
\newpage
\begin{align*}
    \int \nabla&\left(\Phi_R^N(x) \right) \cdot \nabla \psi^\varepsilon_t \varrho^\varepsilon_t \de x \\
    &= \int \left(\nabla\zeta_R(x)  \cdot \nabla \psi^\varepsilon_t \min\left\lbrace e^{W(t)\abs{x}^2}, N\right\rbrace + \zeta_R(x) \nabla \min\left\lbrace e^{W(t)\abs{x}^2}, N\right\rbrace \cdot \nabla \psi^\varepsilon_t\right) \varrho^\varepsilon_t \de x \\
    &\le \int_{\B(0,2R)\setminus\B(0,R)} \frac{\Lambda}{R} \left( b^\varepsilon_t + a^\varepsilon_t \abs{x} \right) \min\left\lbrace e^{W(t)\abs{x}^2}, N\right\rbrace\varrho^\varepsilon_t \de x \\
    & \quad + 2\int \Phi_R^N(x) W(t)\abs{x}\left( b^\varepsilon_t + a^\varepsilon_t \abs{x} \right) \varrho^\varepsilon_t \de x,
\end{align*}
then, putting all together,
\begin{align*}
    &\frac{\de}{\de t}  \int \Phi_R^N(x)  \varrho^\varepsilon_t(x) \de x \\
    & \le \int \underbrace{\left( W'(t) +2\varepsilon W(t)^2 + A W(t) \right)}_{=0}\abs{x}^2 \Phi_R^N(x) \varrho^\varepsilon_t \de x \\
    & \quad +\left(\varepsilon n +2 \max_{t \in [0,1]} b^\varepsilon_t \right) C \int \Phi_R^N(x)  \varrho^\varepsilon_t(x) \de x +\hat{C} \int_{\B(0,2R)\setminus\B(0,R)} \min\left\lbrace e^{W(t)\abs{x}^2}, N\right\rbrace\varrho^\varepsilon_t \de x \\
     & \le \left(\varepsilon n +2 \max_{t \in [0,1]} b^\varepsilon_t \right) C \int \Phi_R^N(x)  \varrho^\varepsilon_t(x) \de x + \hat{C}N\left( 1 - \max_{t \in [0,1]}  \mu^\varepsilon_t\left(\B(0,R) \right) \right).
\end{align*}
By Gr\"onwall's Lemma,
\begin{align*}
    &\int \Phi_R^N(x)  \varrho^\varepsilon_t(x) \de x  \\
    &\le e^{\left(\varepsilon n +2 \underset{t \in [0,1]}{\max}\; b^\varepsilon_t \right) C t}  \int \zeta_R(x) \min\left\lbrace e^{W(0)\abs{x}^2}, N\right\rbrace  \varrho_0(x) \de x \\
    & + \frac{\hat{C}N\left( 1 - \displaystyle \max_{t \in [0,1]} \mu^\varepsilon_t\left(\B(0,R) \right) \right)}{\left(\varepsilon n +2 \underset{t \in [0,1]}{\max}\; b^\varepsilon_t \right) C} \left( e^{\left(\varepsilon n +2 \displaystyle\max_{t \in [0,1]} b^\varepsilon_t \right) C t} -1\right).
\end{align*}
Passing to the limit as $R \to +\infty$, according to Fatou's Lemma and using the continuity of $\displaystyle \max_{t \in [0,1]} \varrho^\varepsilon_t$, we get
\[\int \min\left\lbrace e^{W(t)\abs{x}^2}, N\right\rbrace  \varrho^\varepsilon_t(x) \de x  \le  e^{\left(\varepsilon n +2 \underset{t \in [0,1]}{\max}\; b^\varepsilon_t \right) C_2 t}  \int e^{W(0)\abs{x}^2} \varrho_0(x) \de x. \]
Passing now to the limit as $N \to +\infty$, 
\[\int e^{W(t)\abs{x}^2}  \varrho^\varepsilon_t(x) \de x  \le  e^{\left(\varepsilon n +2 \underset{t \in [0,1]}{\max}\; b^\varepsilon_t \right) C_2 t}  \int e^{W(0)\abs{x}^2} \varrho_0(x) \de x. \]
As a consequence, 
\[\int e^{W(t)\abs{x}^2}  \varrho^\varepsilon_t(x) \de x  < +\infty,\]
then $I < +\infty$ and, using monotonicity of exponential and Markov's inequality, for every $R>0$,
\begin{align*}
    \int_{\R^n \setminus \B(0,R)} \varrho^\varepsilon_t \de\mathcal{L}^n & = \mu^\varepsilon_t\left( \abs{x}\ge R\right) = \mu^\varepsilon_t\left( e^{W(t)\abs{x}^2} \ge e^{W(t)R^2}\right) \\
    &\le \frac{\int e^{W(t)\abs{x}^2}  \varrho^\varepsilon_t(x) \de x}{e^{W(t)R^2}} \le I e^{-W(1)R^2}.
\end{align*}
The desired inequality simply follows passing to the supremum with respect to $t$.
\par\noindent
$(c)$ Consider $k \in \N$. If $k = 0$, the result is trivial. Let us, then, consider only the case $k \ge 1$. By Fubini--Tonelli's Theorem, combined with $(b)$, we get
% \begin{align*}
% \int \abs{x}^k \varrho^\varepsilon_t(x) \de x &= \int_{\overline{\B(0,1)}} \abs{x}^k \varrho^\varepsilon_t(x) \de x + \int_{\R^n \setminus \overline{\B(0,1)}}\abs{x}^k \varrho^\varepsilon_t(x) \de x \le \\
% &\le \tilde{C} + \int_{\R^n \setminus \overline{\B(0,1)}}\left(1 + \int_1^{\abs{x}} kr^{k-1}dr\right) \varrho^\varepsilon_t(x) \de x = \\
% &= \tilde{C} + \int_1^{+\infty} kr^{k-1} \left( \int_{\R^n \setminus \overline{\B(0,r)}}\varrho^\varepsilon_t(x) \de x \right) dr
% \end{align*}
% so, using $(b)$, we get 
\[\int \abs{x}^k \varrho^\varepsilon_t(x) \de x \le \tilde{C} + k \int_1^{+\infty} r^{k-1}e^{W(1)r^2}\de r < +\infty \]
and, passing to the supremum with respect to $t$,
\[\underset{t \in [0,1]}{\sup}\; \int \abs{x}^k \varrho^\varepsilon_t(x) \de x  < +\infty.\]

Now, given $\alpha_0 >0$, take 
\[\alpha(t) = \alpha_0 e^{-Q t}\]
where 
\[Q = \frac{9}{2}\varepsilon\Lambda + \left( \frac{1}{2} \Lambda + \frac{3}{2}\right) \max \left\{ \max_{t \in [0,1]} b^\varepsilon_t, \max_{t \in [0,1]} a^\varepsilon_t \right\}.\]
A direct computation provides
\[\begin{cases}
    \alpha'(t) + Q \alpha(t) = 0,\\
    \alpha(0) = \alpha_0
\end{cases}\]
and $ 0 < \alpha(1) \le \alpha < \alpha_0$. Now, being sufficient to study the case $t \in \left]0,1\right[$, given $R >1$ and $i=1,\dots n$, using the derivation under the integral sign and the parabolic Kato's inequality,
\begin{align*}
    &\frac{\de}{\de t} \int \zeta_R(x)\alpha(t)(1+\abs{x}^2) \abs{\nabla_{x_i}\varrho^\varepsilon_t(x)}\de x = \int \zeta_R(x)\alpha'(t)(1+\abs{x}^2) \abs{\nabla_{x_i}\varrho^\varepsilon_t(x)}\de x\\
    &+ \int \zeta_R(x)\alpha(t)(1+\abs{x}^2) \frac{\partial}{\partial t} \abs{\nabla_{x_i}\varrho^\varepsilon_t(x)}\de x \le \int \zeta_R(x)\alpha'(t)(1+\abs{x}^2) \abs{\nabla_{x_i}\varrho^\varepsilon_t(x)}\de x\\
    &+ \frac{\varepsilon}{2}\int \zeta_R(x)\alpha(t)(1+\abs{x}^2) \Delta \abs{\nabla_{x_i}\varrho^\varepsilon_t(x)}\de x \\
    &-\int \zeta_R(x)\alpha(t)(1+\abs{x}^2) \textup{sgn}(\nabla_{x_i}\varrho^\varepsilon_t(x))\div  \Big( \nabla\psi^\varepsilon_t(x) \nabla_{x_i}\varrho^\varepsilon_t(x) + \nabla_{x_i}\nabla\psi^\varepsilon_t(x) \varrho^\varepsilon_t(x)\Big) \de x.
\end{align*}
Now, using Gauss--Green's formula,
\begin{align*}
    &\int \zeta_R(x)\alpha(t)(1+\abs{x}^2) \Delta \abs{\nabla_{x_i}\varrho^\varepsilon_t(x)}\de x  \\
    &\le \int 9\Lambda\alpha(t) \abs{\nabla_{x_i}\varrho^\varepsilon_t(x)}\de x + 2n\int \zeta_R(x) \alpha(t) \abs{\nabla_{x_i}\varrho^\varepsilon_t(x)}\de x
\end{align*}
% \begin{align*}
%     &\int \zeta_R(x)\alpha(t)(1+\abs{x}^2) \Delta \abs{\nabla_{x_i}\varrho^\varepsilon_t(x)}\de x = \int \Delta \zeta_R(x) \alpha(t)(1+\abs{x}^2) \abs{\nabla_{x_i}\varrho^\varepsilon_t(x)}\de x + \\
%     &+4 \int \nabla \zeta_R(x) \cdot x \alpha(t)\abs{\nabla_{x_i}\varrho^\varepsilon_t(x)}\de x + 2n\int \zeta_R(x) \alpha(t) \abs{\nabla_{x_i}\varrho^\varepsilon_t(x)}\de x \le \\
%     &\le \int 9\Lambda\alpha(t) \abs{\nabla_{x_i}\varrho^\varepsilon_t(x)}\de x + 2n\int \zeta_R(x) \alpha(t) \abs{\nabla_{x_i}\varrho^\varepsilon_t(x)}\de x
% \end{align*}
and
\begin{align*}
-\int & \zeta_R(x)\alpha(t)(1+\abs{x}^2) \textup{sgn}(\nabla_{x_i}\varrho^\varepsilon_t(x)) \div \Big( \nabla\psi^\varepsilon_t(x) \nabla_{x_i}\varrho^\varepsilon_t(x) + \nabla_{x_i}\nabla\psi^\varepsilon_t(x) \varrho^\varepsilon_t(x)\Big) \de x \\
& \le \left(\frac{1}{2} \Lambda + \frac32\right)\max \left\{ \max_{t \in [0,1]} b^\varepsilon_t, \max_{t \in [0,1]} a^\varepsilon_t \right\}  \int  \alpha(t)(1+\abs{x}^2) \abs{\nabla_{x_i}\varrho^\varepsilon_t(x)}\de x + \tilde{C}.
\end{align*}
From this, it comes that 
\[\frac{\de}{\de t} \int \zeta_R(x)\alpha(t)(1+\abs{x}^2) \abs{\nabla_{x_i}\varrho^\varepsilon_t(x)}\de x \le n\varepsilon\int \zeta_R(x) \alpha(t) \abs{\nabla_{x_i}\varrho^\varepsilon_t(x)}\de x + \tilde{C}\]
so, by Gr\"onwall's Lemma and Fatou's Lemma,
\[\underset{t \in [0,1]}{\sup}\; \int \left(1+\abs{x}^2\right)\abs{\nabla_{x_i}\varrho^\varepsilon_t(x)} \de x  < +\infty.\]
It is now a direct consequence of the definition of gradient to obtain
\[\underset{t \in [0,1]}{\sup}\; \int \left(1+\abs{x}^2\right)\abs{\nabla\varrho^\varepsilon_t(x)} \de x  < +\infty.\]

The last part of the statement can be obtained in an analogous way.
\end{proof}

The last technical Lemma is a density result for the class of test function used in the dual problem for the Benamou--Brenier formulation.

\begin{lemma}\label{density}
Let $\mu_0$ and $\mu_1$ satisfying Assumptions \ref{assumptions}. Then, for every $\varepsilon >0$,
\begin{align*}
    &\sup \left\lbrace \int f_1 \de\mu_1 - \int f_0 \de \mu_0: f \in C^1([0,1])\cap C_b^2(\R^n), \frac{\partial}{\partial t} f + \frac{\varepsilon}{2}\Delta f + \frac{1}{2}\abs{\nabla f}^2 \le 0 \right\rbrace \\
    & = \sup \left\lbrace \int f_1 \de\mu_1 - \int f_0 \de \mu_0: f \in \mathfrak{H}, \frac{\partial}{\partial t} f + \frac{\varepsilon}{2}\Delta f + \frac{1}{2}\abs{\nabla f}^2 \le 0\right\rbrace 
\end{align*}
where 
\[\mathfrak{H} = \left\lbrace f \in C^1([0,1])\cap C^2(\R^n):
    m^\varepsilon_t \abs{\xi}^2 \le \xi \cdot D^2f(t,x) \xi \le M^\varepsilon_t\abs{\xi}^2, m^\varepsilon_t,M^\varepsilon_t \in \R  \right\rbrace\]
and $m^\varepsilon_t,M^\varepsilon_t \in \R$ depend on the marginals as in Lemma \ref{psi_t} (b).
\end{lemma}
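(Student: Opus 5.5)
The inequality ``$\le$'' should be the easy direction. Every $f \in C^1([0,1])\cap C^2_b(\R^n)$ has a uniformly bounded Hessian, so it satisfies two-sided Hessian bounds with some real constants. Reading the definition of $\mathfrak{H}$ as allowing arbitrary real $m^\varepsilon_t, M^\varepsilon_t$, as the last clause of its definition suggests, we get $C^1([0,1])\cap C^2_b(\R^n)\subseteq\mathfrak{H}$, and the first supremum is at most the second. The real work is ``$\ge$'': given a subsolution $f\in\mathfrak{H}$, we must produce bounded subsolutions whose values $\int f_1\de\mu_1-\int f_0\de\mu_0$ approximate those of $f$.

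For this I would use the Hopf--Cole transform, which linearizes the constraint. Set $u:=e^{f/\varepsilon}>0$. A direct computation gives
\[
\frac{\partial}{\partial t}u+\frac{\varepsilon}{2}\Delta u=\frac{u}{\varepsilon}\Big(\frac{\partial}{\partial t} f+\frac{\varepsilon}{2}\Delta f+\frac12\abs{\nabla f}^2\Big)\le 0 .
\]
So $f$ is an HJB subsolution if and only if $u$ is a supersolution of the backward heat equation, and this property survives two operations.
\begin{enumerate}[$(i)$]
\item Adding a constant $\delta>0$ preserves it, and $u+\delta$ is bounded below by $\delta$.
\item Composing with a smooth, concave, nondecreasing $\phi_N:[0,\infty[\to[0,N+1]$ with $\phi_N(s)=s$ for $s\le N$ also preserves it, since
\[
\Big(\frac{\partial}{\partial t}+\frac{\varepsilon}{2}\Delta\Big)\phi_N(u)=\phi_N'(u)\Big(\frac{\partial}{\partial t}+\frac{\varepsilon}{2}\Delta\Big)u+\frac{\varepsilon}{2}\phi_N''(u)\abs{\nabla u}^2\le 0 .
\]
\end{enumerate}
Hence
\[
f^{\delta,N}:=\varepsilon\log\big(\phi_N(e^{f/\varepsilon})+\delta\big)
\]
is again a subsolution of $\frac{\partial}{\partial t} f+\frac{\varepsilon}{2}\Delta f+\frac12\abs{\nabla f}^2\le0$, and it takes values in $[\varepsilon\log\delta,\varepsilon\log(N+1+\delta)]$.

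Convergence of the values follows from the growth of $f$. From the Hessian bounds in $\mathfrak{H}$, as in Lemma \ref{psi_t} (c)--(d), $\abs{f_t(x)}\le c+b\abs{x}+\frac a2\abs{x}^2$. Therefore $\abs{f^{\delta,N}_i}\le \abs{f_i}+\varepsilon\log 2+1$ for $\delta\le1$, uniformly in $N$. As $N\to\infty$ and then $\delta\to0$, $f^{\delta,N}_i\to f_i$ pointwise. The marginals have finite second moments (indeed Gaussian moments under Assumptions \ref{assumptions}), so dominated convergence gives
\[
\int f^{\delta,N}_1\de\mu_1-\int f^{\delta,N}_0\de\mu_0\to\int f_1\de\mu_1-\int f_0\de\mu_0 .
\]

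The main obstacle is that $C^2_b$ requires bounded first and second derivatives, not just a bounded function. On the region where $\phi_N'(u)\neq0$ one has $\nabla f^{\delta,N}=\phi_N'(u)\,u\nabla f/(\phi_N(u)+\delta)$. Although $u\le N+1$ there, $\abs{\nabla f}$ grows linearly, so boundedness at infinity needs care. I would try three things, in this order:
\begin{enumerate}[$(i)$]
\item Exploit $\delta>0$: the factor $u/(\phi_N(u)+\delta)$ is at most $1/\delta$ times $u$. Then check whether, on the region where $f$ is very negative, the quadratic lower bound on $f$ together with the semiconcavity bound controls $u\abs{\nabla f}$ and $u\abs{D^2 f}$. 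This would mirror the estimate $u\,\abs{\nabla\log u}\le$ const for heat-type functions.
\item If that fails, first replace $f$ by $f-\eta(1+\abs{x}^2)^{1/2}$-type perturbations, or convolve $u$ with a heat kernel in time-reversed form. Either keeps supersolutions of the backward heat equation and regularizes the tails, at the cost of a vanishing error in the values as $\eta\to0$.
\item Alternatively, observe that Remark \ref{weakeningtestfunctions} only uses boundedness of $f$, $\nabla f$ and $\Delta f$ against a Fokker--Planck pair. So a weaker notion of $C^2_b$ may suffice for the downstream use of this lemma.
\end{enumerate}
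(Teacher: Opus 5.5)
\textbf{Verdict: genuine gap.} Your ``$\le$'' direction and your convergence of the values are fine. The gap is the one you flag yourself as the main obstacle, and none of your three fallbacks closes it. The approximants $f^{\delta,N}=\varepsilon\log(\phi_N(e^{f/\varepsilon})+\delta)$ are bounded subsolutions, but in general they are not in $C^1([0,1])\cap C^2_b(\R^n)$, so they are not admissible in the left-hand supremum.

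Capping $u=e^{f/\varepsilon}$ from above only tames the region where $f$ is large. Two-sided Hessian bounds still allow $f$ to stay bounded along unbounded sets on which $\abs{\nabla f}$ grows linearly. For instance, in $\R^2$ with $0<a<1$,
\[
f(t,x)=\frac{a\,x_1^2}{2(1+at)}-\frac{a\,x_2^2}{2(1-at)}-\frac{\varepsilon}{2}\int_0^t\Big(\frac{a}{1+as}-\frac{a}{1-as}\Big)\de s
\]
solves $\partial_t f+\frac{\varepsilon}{2}\Delta f+\frac12\abs{\nabla f}^2=0$ and has bounded Hessian. Yet on the cone $\{x_1^2/(1+at)=x_2^2/(1-at)\}$ it is constant in $x$, while $\abs{\nabla f(t,x)}\ge\frac{a}{1+a}\abs{x}$. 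There, for $N$ large, $\nabla f^{\delta,N}=\frac{u}{u+\delta}\nabla f$ is unbounded. So option (i) cannot work. Option (iii) changes the statement: the extension in Remark \ref{weakeningtestfunctions} genuinely needs $\nabla f$ bounded, since $v_t$ is only in $L^1(\nu_t)$.

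\textbf{The missing idea} (the paper's route) is not to modify $f$ pointwise on $[0,1[$ at all. Keep only its terminal value and regenerate a competitor through the heat flow:
\[
f_R(t,x)=\varepsilon\log\Big(\int\big(e^{f(1,y)/\varepsilon}\zeta_R(y)+\tfrac1R\big)\mathsf{r}_{\varepsilon(1-t)}(x,y)\de y\Big).
\]
This function has three useful properties.
\begin{enumerate}[$(i)$]
\item It solves the HJB equation with equality.
\item It lies in $C^1([0,1])\cap C^2_b(\R^n)$ automatically, being $\varepsilon\log$ of the backward heat flow of a $C^2$ datum that equals $1/R$ outside $\B(0,2R)$ and is bounded below by $1/R$.
\item At $t=1$ one has $\int f_{R,1}\de\mu_1\to\int f_1\de\mu_1$. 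The region $\abs{y}>R$ costs at most $\varepsilon\log R\cdot\mu_1(\abs{y}>R)\to0$.
\end{enumerate}

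At $t=0$ you need an inequality, not convergence to $f_0$, and here your Hopf--Cole computation is exactly the right tool. Since $e^{f/\varepsilon}$ is a positive supersolution of the backward heat equation dominating $e^{f_1/\varepsilon}\zeta_R$ at $t=1$, the comparison principle applies, because the difference is bounded below. It gives $f_{R,0}\le\varepsilon\log(e^{f_0/\varepsilon}+\frac1R)$, hence $\limsup_R\int f_{R,0}\de\mu_0\le\int f_0\de\mu_0$. Together these yield
\[
\liminf_R\Big(\int f_{R,1}\de\mu_1-\int f_{R,0}\de\mu_0\Big)\ge\int f_1\de\mu_1-\int f_0\de\mu_0 .
\]
Your option (ii) points in this direction, but the essential moves are to discard $f$ on $[0,1[$ entirely and to handle the $t=0$ term by comparison. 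The paper leaves that comparison step implicit behind ``dominated convergence''.
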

\begin{proof}
First of all, the inequality
\begin{align*}
    &\sup \left\lbrace\int f_1 \de\mu_1 - \int f_0 \de \mu_0: f \in C^1([0,1])\cap C_b^2(\R^n), \frac{\partial}{\partial t} f + \frac{\varepsilon}{2}\Delta f + \frac{1}{2}\abs{\nabla f}^2 \le 0 \right\rbrace \\
    & \le \sup \left\lbrace \int f_1 \de\mu_1 - \int f_0 \de \mu_0: f \in \mathfrak{H}, \frac{\partial}{\partial t} f + \frac{\varepsilon}{2}\Delta f + \frac{1}{2}\abs{\nabla f}^2 \le 0\right\rbrace 
\end{align*}
is trivial. For the converse inequality, take $f \in \mathfrak{H}$ such that 
\[\frac{\partial}{\partial t} f + \frac{\varepsilon}{2}\Delta f + \frac{1}{2}\abs{\nabla f}^2 \le 0\]
and consider, given $R>1$,
\[f_R(t,x)= \varepsilon \log \left( \int \left(e^{\frac{f(1,y)}{\varepsilon}}\zeta_R(y) +\frac{1}{R}\right)\mathsf{r}_{\varepsilon(1-t)}(x,y) \de y\right). \]
By construction, $f_R \in C^1([0,1])\cap C_b^2(\R^n)$ and 
\[\frac{\partial}{\partial t} f_R + \frac{\varepsilon}{2}\Delta f_R + \frac{1}{2}\abs{\nabla f_R}^2 = 0,\]
so the result follows by the dominated convergence Theorem.
\end{proof}

\subsection{Proof of the main Theorem}\label{proof_main_theorem}
We are now able to prove the equivalence between the optimal relative entropy and the kinetic energy provided by the Fokker-Planck pair $(\mu^\varepsilon_t,\nabla\psi^\varepsilon_t)$ defined in \eqref{eq:rho_mu} and \eqref{eq:psi}. The proof relies mostly on a derivation under the integral sign and on an application of Gauss--Green's formula.
\begin{proposition}\label{fluidnotvar}
Let $\mu_0$ and $\mu_1$ satisfying Assumptions \ref{assumptions}. Then, for every $\varepsilon >0$,
\[  \varepsilon H(\gamma^\varepsilon \mid \mathsf{R}_\varepsilon ) = \varepsilon H(\mu_0 \mid \mathcal{L}^n) +\int \int_0^1 \frac{\abs{\nabla\psi_t^\varepsilon}^2}{2}\de\mu^\varepsilon_t \de t.\]
\end{proposition}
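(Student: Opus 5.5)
The plan is to reproduce, in the non-compact setting, the computation carried out in the Gaussian-mixture example. We first express the entropy through the potentials. Since $\gamma^\varepsilon=(f^\varepsilon\otimes g^\varepsilon)\mathsf R_\varepsilon$ has marginals $\mu_0,\mu_1$,
\[
\varepsilon H(\gamma^\varepsilon\mid\mathsf R_\varepsilon)=\int\varepsilon\log f^\varepsilon\,\de\mu_0+\int \varepsilon\log g^\varepsilon\,\de\mu_1 .
\]
Using $\varrho_0=f^\varepsilon g^\varepsilon_0$ from \eqref{SchoSystem}, we have $\varepsilon\log f^\varepsilon=\varepsilon\log\varrho_0-\psi^\varepsilon_0$. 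Hence
\[
\varepsilon H(\gamma^\varepsilon\mid\mathsf R_\varepsilon)=\varepsilon H(\mu_0\mid\mathcal L^n)+\int\psi^\varepsilon_1\,\de\mu_1-\int\psi^\varepsilon_0\,\de\mu_0 .
\]
All three integrals are finite: $\abs{\psi^\varepsilon_t}$ grows at most quadratically (Lemmas \ref{psi_1}(c) and \ref{psi_t}(d)), $\mu_0,\mu_1$ have Gaussian tails by Assumptions \ref{assumptions}, and $V_0$ grows quadratically so $\log\varrho_0\in L^1(\mu_0)$. It therefore remains to prove
\[
\int\psi^\varepsilon_1\,\de\mu_1-\int\psi^\varepsilon_0\,\de\mu_0=\int_0^1\int\frac{\abs{\nabla\psi^\varepsilon_t}^2}{2}\varrho^\varepsilon_t\,\de\mathcal L^n\,\de t .
\]

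We prove this by differentiating $t\mapsto\int\psi^\varepsilon_t\varrho^\varepsilon_t\,\de\mathcal L^n$ on $]0,1[$. Formally, the HJB and Fokker--Planck equations give
\[
\partial_t(\psi\varrho)=-\tfrac\varepsilon2\varrho\Delta\psi-\tfrac12\abs{\nabla\psi}^2\varrho+\tfrac\varepsilon2\psi\Delta\varrho-\psi\,\div(\varrho\nabla\psi).
\]
After integration by parts, the two Laplacian terms cancel and the last term gives $+\int\abs{\nabla\psi}^2\varrho$, so the derivative is $\frac12\int\abs{\nabla\psi^\varepsilon_t}^2\varrho^\varepsilon_t$. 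The integrations by parts are justified by inserting the cutoff $\zeta_R$ from \eqref{eq:cut_off}. The boundary terms are integrals over $\B(0,2R)\setminus\B(0,R)$ of quantities like $\frac1R\abs{\psi}\abs{\nabla\varrho}$, $\frac1R\abs{\nabla\psi}\varrho$ and $\frac{1}{R}\abs{\psi}\abs{\nabla\psi}\varrho$. By Lemma \ref{psi_t}(c),(d) these are bounded by a polynomial in $\abs{x}$ of degree at most $3$ times $\varrho$ or $\abs{\nabla\varrho}$. They vanish as $R\to\infty$ thanks to the moment bounds in Proposition \ref{varrho_t}(b),(c), which we apply with $m^\varepsilon_t,M^\varepsilon_t$ from Lemma \ref{psi_t}(b). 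Proposition \ref{varrho_t}(c) only gives weight $(1+\abs{x}^2)$ on $\abs{\nabla\varrho}$; combined with the factor $1/R$ this is enough, since $\abs{x}^2/R\lesssim\abs{x}$ on the annulus. If needed, the Kato argument also yields higher weights.

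For the differentiation under the integral sign, we do not differentiate directly. We integrate the cut-off identity in time on $[s,\tau]\subset\,]0,1[$, where every function is smooth and $\partial_t(\psi\varrho\zeta_R)$ is integrable on the compact set $[s,\tau]\times\B(0,2R)$. We then let $R\to\infty$ by dominated convergence. The dominating functions are $\abs{\psi}\abs{\Delta\varrho}$, $\abs{\Delta\psi}\varrho$, $\abs{\psi}\abs{\div(\varrho\nabla\psi)}$ and $\abs{\nabla\psi}^2\varrho$. Each is at most $(1+\abs{x}^2)(\varrho+\abs{\nabla\varrho}+\abs{\Delta\varrho})$ times a factor, using $\abs{D^2\psi}\le a_t$, $\abs{\nabla\psi}\lesssim1+\abs{x}$ and $\abs{\psi}\lesssim1+\abs{x}^2$. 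The one exception is $\abs{\psi}\abs{\nabla\psi}\abs{\nabla\varrho}$, which has weight $\abs{x}^3$. The worst weights are thus $\abs{x}^2\abs{\Delta\varrho}$ and $\abs{x}^3\abs{\nabla\varrho}$. The first is controlled by Proposition \ref{varrho_t}(c). For the second, I would avoid it by integrating by parts once more, rewriting $\int\psi\,\div(\varrho\nabla\psi)\zeta_R$ as $-\int\nabla(\psi\zeta_R)\cdot\nabla\psi\,\varrho$, so that only Gaussian moments of $\varrho$ (Proposition \ref{varrho_t}(b)) are needed. The constants $a^\varepsilon_t,b^\varepsilon_t,c^\varepsilon_t$ are continuous on $[0,1]$, so the bounds are uniform in $t$. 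This gives
\[
\int\psi^\varepsilon_\tau\varrho^\varepsilon_\tau\,\de\mathcal L^n-\int\psi^\varepsilon_s\varrho^\varepsilon_s\,\de\mathcal L^n=\int_s^\tau\int\frac{\abs{\nabla\psi^\varepsilon_t}^2}{2}\varrho^\varepsilon_t\,\de\mathcal L^n\,\de t .
\]

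The main obstacle is the final limit $s\downarrow0$, $\tau\uparrow1$, in particular $\tau\to1$, where $g^\varepsilon_t\to g^\varepsilon$ and $\psi^\varepsilon_\tau\to\psi^\varepsilon_1$. The right-hand side converges by monotone convergence. On the left, $\psi^\varepsilon_\tau\to\psi^\varepsilon_1$ pointwise, since $\psi^\varepsilon_t$ is $C^1$ up to $t=1$ by Lemma \ref{psi_t}(a), and $\varrho^\varepsilon_\tau\to\varrho_1$ pointwise. The domination $\abs{\psi^\varepsilon_\tau}\varrho^\varepsilon_\tau\le C(1+\abs{x}^2)\varrho^\varepsilon_\tau$ is not a fixed integrable function. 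I would handle this with uniform integrability: the bound $\sup_t\int e^{W(1)\abs{x}^2}\varrho^\varepsilon_t<\infty$ from Proposition \ref{varrho_t}(b) makes $\{(1+\abs{x}^2)\varrho^\varepsilon_t\}_t$ uniformly integrable, and Vitali's theorem then gives convergence. The same argument applies as $s\to0$, which completes the identity and hence the proposition.
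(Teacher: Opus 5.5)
Your proposal is correct and follows the same route as the paper. Both rewrite $\varepsilon H(\gamma^\varepsilon\mid\mathsf R_\varepsilon)$ through $\psi^\varepsilon_0,\psi^\varepsilon_1$, differentiate $t\mapsto\int\psi^\varepsilon_t\,\de\mu^\varepsilon_t$ using the HJB/Fokker--Planck system, integrate by parts and integrate in time. Your version justifies these steps more carefully than the paper's direct differentiation under the integral sign: you use cut-offs, work on $[s,\tau]$, and integrate by parts once more to avoid the $\abs{x}^3\abs{\nabla\varrho^\varepsilon_t}$ weight that Proposition \ref{varrho_t}(c) does not cover.

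The one step to tighten is the endpoint limit. Gaussian moments alone control only the tails and give no uniform integrability with respect to $\mathcal L^n$. Add Scheffé's lemma ($\varrho^\varepsilon_\tau\to\varrho_1$ pointwise with unit mass, hence in $L^1$), or local boundedness of $\varrho^\varepsilon$ on $[0,1]\times\B(0,K)$, before combining with the uniform Gaussian tails.
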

\begin{proof}
First of all, using the marginal constraints on $\gamma^\varepsilon$,
\[ 
H(\gamma^\varepsilon \mid \mathsf{R}_\varepsilon) = \int \log (f^\varepsilon \otimes g^\varepsilon) \de \gamma^\varepsilon = \int \log f^\varepsilon \de\mu_0 + \int \log g^\varepsilon \de\mu_1.
\]
Now, by Lemma \ref{psi_t} and Proposition \ref{varrho_t}, for every $t \in [0,1]$, we can argue that the function $ x \mapsto \psi^\varepsilon_t(x) \varrho^\varepsilon_t(x)$ is integrable and that, for every $x \in \R$, the function $t \mapsto \psi^\varepsilon_t(x) \varrho^\varepsilon_t(x)$ is differentiable, with $\abs*{\partial_t \left( \psi^\varepsilon_t(x) \varrho^\varepsilon_t(x) \right)}$ dominated by an integrable function, uniformly in $t$.
We can thus differentiate under the integral sign, finding
\[\frac{\de}{\de t} \int  \psi^\varepsilon_t \de\mu^\varepsilon_t = \int \frac{\partial}{\partial t} (\psi^\varepsilon_t \varrho^\varepsilon_t) \de\mathcal{L}^n = \int \left( \frac{\partial}{\partial t}\psi^\varepsilon_t \varrho^\varepsilon_t + \psi^\varepsilon_t \frac{\partial}{\partial t} \varrho^\varepsilon_t \right) \de\mathcal{L}^n, \]
but, using the equations that $\psi_t$ and $\varrho_t$ solve,
\begin{align*}
\frac{\de}{\de t} \int \psi^\varepsilon_t \de\mu^\varepsilon_t = &-\frac{\varepsilon}{2}\int \Delta \psi^\varepsilon_t \varrho^\varepsilon_t \de\mathcal{L}^n -\frac{1}{2}\int \abs{\nabla \psi^\varepsilon_t}^2 \varrho^\varepsilon_t \de\mathcal{L}^n \\
&-\frac{\varepsilon}{2} \int \psi^\varepsilon_t \Delta \varrho^\varepsilon_t \de\mathcal{L}^n - \int \psi^\varepsilon_t \div (\varrho^\varepsilon_t \nabla \psi^\varepsilon_t) \de\mathcal{L}^n.
\end{align*}
Then, according to Gauss--Green's Formula, thanks to Lemma \ref{psi_t} and Proposition \ref{varrho_t},
\[\frac{\de}{\de t} \int \psi^\varepsilon_t \de\mu^\varepsilon_t = \int \frac{\abs{\nabla \psi^\varepsilon_t}^2}{2}\varrho^\varepsilon_t \de\mathcal{L}^n.\]
Integrating both sides with respect to $t$ we find,
\[\int \psi^\varepsilon_1 \de\mu_1 - \int \psi^\varepsilon_0 \de\mu_0 = \int_0^1\int \frac{\abs{\nabla \psi^\varepsilon_t}^2}{2}\varrho^\varepsilon_t \de\mathcal{L}^n \de t. \]
Noticing that $\psi^\varepsilon_0 = \varepsilon\log \varrho_0 - \varepsilon\log f^\varepsilon$ and $\psi^\varepsilon_1 = \varepsilon\log g^\varepsilon$, the result follows.
\end{proof}
Thanks to the previous Proposition, we get one inequality for our main Theorem. The other is obtained by duality.

\begin{proof}[Proof of Theorem \ref{main_theorem}.]
Consider $(\eta_t\mathcal{L}^n,v_t)$ a Fokker--Planck pair such that $\eta_0 = \varrho_0, \eta_1 = \varrho_1$ and $f \in C^1([0,1])\cap C_b^2(\R^n)$ such that
\[\frac{\partial}{\partial t} f + \frac{\varepsilon}{2}\Delta f + \frac{1}{2}\abs{\nabla f}^2 \le 0\]
We have, thanks to Remark \ref{weakeningtestfunctions},
\begin{align*}
    &\int_0^1 \int \frac{\abs{v_t}^2}{2}\eta_t \de\mathcal{L}^n \de t  = \int_0^1 \int \frac{\abs{v_t}^2}{2}\eta_t \de\mathcal{L}^n \de t \\
    &-\int_{0}^{1} \int \left( \frac{\partial}{\partial t} f + \frac{\varepsilon}{2}\Delta f + v_t\cdot \nabla f \right) \eta_t \de\mathcal{L}^n \de t + \int f_1 \eta_1 \de\mathcal{L}^n - \int f_0 \eta_0 \de\mathcal{L}^n \\
    & \ge - \int_0^1 \int \left( \frac{\partial}{\partial t} f + \frac{\varepsilon}{2}\Delta f + \frac{1}{2}\abs{\nabla f}^2  \right)\eta_t \de\mathcal{L}^n \de t + \int f_1 \de\mu_1 - \int f_0 \de\mu_0 \\
    & \ge \int f_1 \de\mu_1 - \int f_0 \de\mu_0.
\end{align*}
Passing to the supremum with respect to $f \in C^1([0,1])\cap C_b^2(\R^n)$ and to the infimum with respect to $(\eta_t\mathcal{L}^n,v_t)$,
\begin{align*}
    &\inf \left\lbrace \int_0^1 \int \frac{\abs{v_t}^2}{2}\eta_t \de\mathcal{L}^n \de t: (\eta_t\mathcal{L}^n,v_t) \textup{  Fokker--Planck pair, } \eta_0 = \varrho_0, \eta_1 = \varrho_1\right\rbrace \\
    & \ge \sup \left\lbrace \int f_1 \de\mu_1 - \int f_0 \de \mu_0: f \in C^1([0,1])\cap C_b^2(\R^n), \frac{\partial}{\partial t} f + \frac{\varepsilon}{2}\Delta f + \frac{1}{2}\abs{\nabla f}^2 \le 0 \right\rbrace. 
\end{align*}
By Lemma \ref{density}, $\psi^\varepsilon_t$ is an admissible competitor in the maximization problem, namely
\begin{align*}
    &\inf \left\lbrace \int_0^1 \int \frac{\abs{v_t}^2}{2}\eta_t \de\mathcal{L}^n \de t: (\eta_t\mathcal{L}^n,v_t) \textup{  Fokker--Planck pair, } \eta_0 = \varrho_0, \eta_1 = \varrho_1\right\rbrace \\
    & \ge \int \psi^\varepsilon_1 \de\mu_1 - \int \psi^\varepsilon_0 \de\mu_0.
\end{align*}
Observing that $\psi^\varepsilon_0 = \varepsilon\log \varrho_0 - \varepsilon\log f^\varepsilon$ and $\psi^\varepsilon_1 = \varepsilon\log g^\varepsilon$, we get, by Propositions \ref{Setting} and \ref{fluidnotvar},
\begin{align*}
    &\inf \left\lbrace \int_0^1 \int \frac{\abs{v_t}^2}{2}\eta_t \de\mathcal{L}^n \de t: (\eta_t\mathcal{L}^n,v_t) \textup{  Fokker--Planck pair, } \eta_0 = \varrho_0, \eta_1 = \varrho_1\right\rbrace \\
    & \ge \int \varepsilon\log g^\varepsilon \de\mu_1 + \int \varepsilon\log f^\varepsilon \de\mu_0 - \varepsilon H(\mu_0 \mid \mathcal{L}^n) = \varepsilon H(\gamma^\varepsilon \mid \mathsf{R}_\varepsilon) - \varepsilon H(\mu_0 \mid \mathcal{L}^n) \\
    & = \varepsilon \underset{\gamma \in \Gamma(\mu_0,\mu_1)}{\min} H(\gamma \mid \mathsf{R}_\varepsilon ) - \varepsilon  H(\mu_0 \mid \mathcal{L}^n) = \int_0^1 \int \frac{\abs{\nabla\psi_t^\varepsilon}^2}{2}\de\mu^\varepsilon_t \de t  \\
    &=\inf \left\lbrace \int_0^1 \int \frac{\abs{v_t}^2}{2}\eta_t \de\mathcal{L}^n \de t: (\eta_t\mathcal{L}^n,v_t) \textup{  Fokker--Planck pair, } \eta_0 = \varrho_0, \eta_1 = \varrho_1\right\rbrace.
\end{align*}
Then, 
\begin{align*}
\varepsilon &\underset{\gamma \in \Gamma(\mu_0,\mu_1)}{\min} H(\gamma \mid \mathsf{R}_\varepsilon ) = \varepsilon H(\mu_0 \mid \mathcal{L}^n) \\
&+\min \left\lbrace \int_0^1 \int \frac{\abs{v_t}^2}{2}\eta_t \de\mathcal{L}^n \de t: (\eta_t\mathcal{L}^n,v_t) \textup{  Fokker--Planck pair, } \eta_0 = \varrho_0, \eta_1 = \varrho_1\right\rbrace.
\end{align*}
As regards the uniqueness, consider the change of variables $q_t = \eta_t v_t$, the set 
\[\mathscr{C} =  \left\lbrace (\eta,q): (\eta_t\mathcal{L}^n, v_t) \textup{ is a Fokker--Planck pair, } \eta_0\mathcal{L}^n = \mu_0, \eta_1\mathcal{L}^n = \mu_1\right\rbrace,\]
the convex and lower semi-continuous function $\Theta: \left[0,+\infty\right[ \times\R \to [0,+\infty]$ such that
\[\Theta(x,y) = \begin{cases}
    \frac{y^2}{x} & \textup{if } x>0,\\
    0 & \textup{if } x=y=0,\\
    +\infty & \textup{otherwise},\\
\end{cases}\]
and the convex functional $\mathfrak{F} : \mathscr{C} \to [0,+\infty]$ such that
\[\mathfrak{F}(\eta,q) = \int_0^1 \int \frac{1}{2}\Theta\left(\eta_t,\abs{q_t}\right) \de \mathcal{L}^n \de t = \int_0^1 \int \frac{\abs{v_t}^2}{2}\eta_t \de\mathcal{L}^n \de t.\]
Take another minimizer $(\Bar{\eta}_t,\Bar{v}_t)$ for kinetic energy minimization and set $\Bar{q}= \Bar{\eta}\Bar{v}$. By minimality, $\mathfrak{F}(\overline{\eta}_t,\overline{q}_t) = \mathfrak{F}(\varrho^\varepsilon_t,\varrho^\varepsilon_t \nabla\psi^\varepsilon_t)$ so, given $\lambda \in \left]0,1\right[$ and setting
\begin{align*}
    \eta^\lambda_t &= (1-\lambda) \eta_t + \lambda \varrho^\varepsilon_t, & q^\lambda_t &= (1-\lambda) q_t + \lambda \varrho^\varepsilon_t \nabla\psi^\varepsilon_t,
\end{align*}
we get 
\[\mathfrak{F}(\eta^\lambda_t,q^\lambda_t) \ge \mathfrak{F}(\varrho^\varepsilon_t,\varrho^\varepsilon_t \nabla\psi^\varepsilon_t) = (1-\lambda) \mathfrak{F}(\overline{\eta}_t,\overline{q}_t) + \lambda \mathfrak{F}(\varrho^\varepsilon_t,\varrho^\varepsilon_t \nabla\psi^\varepsilon_t)\] 
and, using also the convexity of $\mathfrak{F}$ to obtain the converse inequality, we arrive to
\[\mathfrak{F}(\eta^\lambda_t,q^\lambda_t) = (1-\lambda) \mathfrak{F}(\overline{\eta}_t,\overline{q}_t) + \lambda \mathfrak{F}(\varrho^\varepsilon_t,\varrho^\varepsilon_t \nabla\psi^\varepsilon_t),\] 
so $(\eta^\lambda_t,q^\lambda_t)$ also minimizes $\mathfrak{F}$ for every $\lambda \in [0,1]$. That means that $\mathfrak{F}$ is constant on the line connecting $(\varrho^\varepsilon_t,\varrho^\varepsilon_t \nabla\psi^\varepsilon_t)$ and $(\overline{\eta}_t,\overline{q}_t)$ and (since we are in the case $x>0$) this permits us to say that for every $\lambda \in [0,1]$,
\[\Theta(\eta^\lambda_t,q^\lambda_t) = (1-\lambda) \Theta(\overline{\eta}_t,\overline{q}_t) + \Theta(\varrho^\varepsilon_t,\varrho^\varepsilon_t \nabla\psi^\varepsilon_t).\]
But the previous implies that there exists $\alpha(t,x)>0$ such that
\[(\overline{\eta}_t,\overline{q}_t) = \alpha(t,x)(\varrho^\varepsilon_t,\varrho^\varepsilon_t \nabla\psi^\varepsilon_t),\]
so, using the fact that both $(\overline{\eta}_t,\overline{v}_t)$ and $(\varrho^\varepsilon_t,\varrho^\varepsilon_t \nabla\psi^\varepsilon_t)$ are Fokker--Planck pairs, we get, weakly,
\[\frac{\partial}{\partial t} \alpha_t + \nabla \alpha_t \cdot \nabla \psi^\varepsilon_t = 0.\]
By the fact that both $\overline{\eta}_t$ and $\varrho^\varepsilon_t$ are probability measures, we get that $\alpha_0, \alpha_1 = 1$ a.e. in $\R^n$. In other words, $\alpha$ is a weak solution of
\[\begin{cases}
    \frac{\partial}{\partial t} \alpha_t + \nabla \alpha_t \cdot \nabla \psi^\varepsilon_t = 0 & \textup{in } \left]0,1\right[ \times \R^n,\\
    \alpha_0 = 1 & \textup{on } \R^n,\\
    \alpha_1 = 1 & \textup{on } \R^n,\\
\end{cases}\]
and, since the characteristics method implies the uniqueness of the solution, we get $\alpha = 1$. It is now sufficient to use the strict convexity of the map $v_t \mapsto \int_0^1\int \frac{\abs{v_t}^2}{2}\varrho^\varepsilon_t \de\mathcal{L}^n \de t$
to conclude. 
\end{proof}

\paragraph*{Acknowledgment} L.N. and M.G. benefited from the support of the FMJH Program PGMO and from the ANR project GOTA (ANR-23-CE46-0001). While this work was written, L.T.\ was associated to INdAM (Istituto Nazionale di Alta Matematica ``Francesco Severi'') and the group GNAMPA.

\par\noindent M. G. is funded by the European Union and, as part of the MSCA program, is also supported, as secondment, by Laboratoire Jean Alexandre Dieudonné, Université Côte d'Azur, CNRS. This project has received funding from the European Union’s MSCA--Horizon Europe, grant agreement No 101126554. Views and opinions expressed are however those of the author(s) only and do not necessarily reflect those of the European Union or the European Research Executive Agency. Neither the European Union nor the granting authority can be held responsible for them.
\begin{center}
    \includesvg[height=0.1\textheight]{EU.svg} 
\end{center}

\par\noindent This work is licensed under Creative Commons Attribution 4.0 International. To view a copy of this license, visit \url{https://creativecommons.org/licenses/by/4.0/}.

%\newpage

\printbibliography[heading = bibintoc]	% Bibliography
\end{document}